\documentclass[12pt]{amsart}

\usepackage{amsfonts}
\usepackage{amssymb}
\usepackage[T2A]{fontenc}
\usepackage[cp1251]{inputenc}
\usepackage[russian,english]{babel}

\textheight23cm \textwidth16.5cm \addtolength{\topmargin}{-25pt}
\evensidemargin0.cm \oddsidemargin0.cm

\newtheorem{teo}{Theorem}[section]
\newtheorem{lem}[teo]{Lemma}
\newtheorem{cor}[teo]{Corollary}
\newtheorem{dfn}[teo]{Definition}
\newtheorem{prop}[teo]{Proposition}
\theoremstyle{definition}
\newtheorem{rk}[teo]{Remark}

\usepackage{color}

\def\<{\langle}
\def\>{\rangle}
\DeclareMathOperator{\card}{card}
\DeclareMathOperator{\supp}{supp}
\def\be{\beta}
\def\L{\Lambda}
\def\l{\lambda}
\def\s{\sigma}
\def\a{\alpha}
\def\g{\gamma}
\def\om{\omega}
\def\e{\varepsilon}
\def\f{{\varphi}}

\def\A{{\mathcal A}}
\def\M{{\mathcal M}}
\def\cN{{\mathcal N}}

\def\KI{{\mathcal K_{\text{I}}}}
\def\KII{{\mathcal K_{\text{II}}}}
\def\KIII{{\mathcal K_{\text{III}}}}
\def\KIV{{\mathcal K_{\text{IV}}}}

 \def\N{{\mathbb N}}
 
\begin{document}

\selectlanguage{english}

\title[Locally unital $C^*$-algebras do not admit frames]
{Locally unital $C^*$-algebras do not admit frames}

\author{D.V. Fufaev}

\thanks{The work was supported by the grant 24-11-00124 of the Russian Science Foundation.}

\address{Moscow Center for Fundamental and Applied Mathematics,
\newline
Dept. of Mech. and Math., Lomonosov Moscow State University}

\email{denis.fufaev@math.msu.ru, fufaevdv@rambler.ru}

\begin{abstract}
We study nonunital $C^*$-algebras such that for any element there exists a local unit and prove that in such algebras there are no frames. This fact was previously known only for commutative algebras. Among other results, we establish some necessary properties of frames in $C^*$-algebras (which are of independent interest in the noncommutative topology), and consider several examples of $C^*$-algebras that are new in this context.
\end{abstract}

\maketitle

\section*{Introduction}

In the theory of $C^*$-Hilbert modules, frames were first introduced by Frank and Larson in the articles \cite{FrankLarson1999} and \cite{FrankLarson2002}. Also in these articles the following question was formulated: does every module admit a frame? In Hilbert space theory, the answer is always positive, and frame theory in this context remains an active area of research (see, for example, \cite{MSBAFA} or \cite{TerPoin}); for modules, the situation turned out to be more complex. Namely, in \cite{HLi2010} an example of a Hilbert $C^*$-module that does not admit frames was constructed. This result was further developed in the works \cite{Asadi2016}, \cite{AmAs2016}, \cite{FrAs2020}. 

The question of the existence of frames in a Hilbert $C^*$-module also arose after the works on $\A$-compact operators: \cite{Troitsky2020JMAA}, \cite{TroitFuf2020}, \cite{TroitFufAFA}, where uniform structures constructed by using systems close to frames were introduced and studied for the geometric description of $\A$-compactness. In particular, in \cite{TroitFuf2020} a geometric criterion for $A$-compactness of operators was proved in the case where the range of the operator is a module in which a frame exists.

An example of a module for which the mentioned criterion does not hold was found among modules that do not have frames in the work \cite{Fuf2021faa}. Namely, a topological space and the corresponding commutative $C^*$-algebra considered as a module over itself was introduced.
Thus, within the setting of this problem, even a very simple module is full of interest, for which all pathological properties are encoded in the $C^*$-algebra.
In \cite{Fuf2022Path} the following classification of topological spaces was introduced:

$\KI$ --- $\s$-compact spaces (i.e. spaces that could be covered by a countable family of compact subsets);

$\KII$ --- non-$\s$-compact spaces that have a dense $\s$-compact subset;

$\KIII$ --- spaces 
that have no dense $\s$-compact subset, that is, the complement to any $\s$-compact subset has an interior point, but the point at infinity (in a one-point compactification) may not be interior for the complement; equivalently, there exists a $\s$-compact not precompact subset;

$\KIV$ --- spaces such that the complement to any $\s$-compact subset has an interior point, and (in a one-point compactification) the point at infinity is always interior for the complement; equivalently, every $\s$-compact subset is precompact.

With this classification, several results on non-existence of frames for commutative algebras in topological terms (i.e. in terms of properties of the corresponding topological space) were also obtained. Thus, the question of how to generalize these results to the noncommutative case, in particular, in algebraic terms, arose. In \cite{Fuf2023Th} the result that for $K\in\KII$ the algebra $C_0(K)$ has no standard frames was generalized in this way by using the notion of thick element (i.e. an element that is not a zero divisor).

In the current paper our goal is to generalize the results for spaces $K\in\KIV$ that were obtained in \cite{Fuf2021faa} (namely the fact that corresponding algebras have no frames).
For this we use the notion of the local unit for an element to determine the case when this element has ``compact support'' 
(in the "spirit" of the noncommutative geometry, where a topological space corresponds to a $C^*$-algebra, Theorem \ref{commal} and Remark \ref{commalf} actually establish a correspondence between functions with compact support and elements that have a local unit).
In a purely algebraic situation the notion of local units is also considered, see, for example, \cite{Abrams} or \cite{FazNas}.
For $C^*$-algebras this approach was used by Pedersen in \cite{PedMT66} for the purpose of constructing a minimal hereditary dense ideal.

In Section 1 some preliminaries on $C^*$-algebras, Hilbert $C^*$-modules and frames
are given.

In Section 2 we introduce local units for elements of a $C^*$-algebra and obtain some of their properties.

In Section 3 we define the class of locally unital $C^*$-algebras and study the properties of this class. In addition, we give a characterization of this class in the commutative case in terms of the corresponding topological spaces and describe some examples.

In Section 4 we obtain some technical results on frames in $C^*$-algebras: we prove that the net of finite partial sums of the series of all frame elements is bounded in norm (this result in \cite{Fuf2023Th} was obtained only for at most countable frames) and that for any state on a $C^*$-algebra there exists a frame element such that this state does not vanish on this element (in the commutative case it means that a frame must separate points, an obvious topological fact).

In Section 5 we prove that if a $C^*$-algebra is not unital, but locally unital, then as a module over itself it has no frames --- in particular, it has no standard frames, so there is no stabilization property for it, i.e. this $C^*$-algebra cannot be represented as an orthogonal direct summand in the standard module. This result is a generalization to the noncommutative case of the result that was obtained in \cite[Theorem 12]{Fuf2022Path} for topological spaces from class $\KIV$.

The author is grateful to E.V. Troitsky, V.M. Manuilov, A.I. Shtern, K.L. Kozlov, A.Ya. Helemskii and A.I. Korchagin for helpful discussions.

\section{Preliminaries}

The background on $C^*$-algebras can be found in various monographs, for example, \cite{KadRin1}, \cite{Pedersen}, \cite{BratRob}.

For any $C^*$-algebra $\A$ we denote by $\dot{\A}$ its unitalization, which we assume to be equal to $\A$ if $\A$ is unital. Also, for any $C^*$-algebra $\A$ there exists an approximate unit, i.e. an increasing net of positive elements $\{e_\l\}_{\l\in\L}$, $||e_\l||\le1$, such that
$xe_\l\xrightarrow[\l\in \L]{}x$, $e_\l x\xrightarrow[\l\in \L]{}x$ in norm for any $x\in\A$.

A state on $\A$ is a positive linear functional of norm 1.

\begin{lem}\label{uniform} (\cite[Lemma 2.1]{Fuf2023Th})
Let $\psi$ be an arbitrary state on a $C^*$-algebra $\A$ and $\{e_\l\}$ an approximate unit in $\A$. Then 
$\psi(x-e_\l x)\to 0$ uniformly on bounded sets.
Similarly for $\psi(x- xe_\l)$ and $\psi(x-e_\l x e_\l)$.
\end{lem}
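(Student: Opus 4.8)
The plan is to reduce all three assertions to the single scalar quantity $\psi(1-e_\l)$ by exploiting the Cauchy--Schwarz inequality for states, $|\psi(a^*b)|^2\le\psi(a^*a)\,\psi(b^*b)$. First I would pass to the unitalization $\dot{\A}$, extending $\psi$ to a state with $\psi(1)=1$, and recall that the approximate identity may be chosen with $0\le e_\l\le 1$, so that $1-e_\l$ is a positive contraction and $x-e_\l x=(1-e_\l)x$ makes sense in $\dot{\A}$.

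The key estimate is then immediate. Applying Cauchy--Schwarz with $a^*=1-e_\l$ and $b=x$ gives
\[
|\psi(x-e_\l x)|^2=|\psi((1-e_\l)x)|^2\le \psi((1-e_\l)^2)\,\psi(x^*x).
\]
Since $0\le 1-e_\l\le 1$ we have $(1-e_\l)^2\le 1-e_\l$, whence $\psi((1-e_\l)^2)\le\psi(1-e_\l)$; and $\psi(x^*x)\le\|x\|^2$ because $\psi$ is a state. Therefore
\[
|\psi(x-e_\l x)|^2\le \|x\|^2\,\psi(1-e_\l).
\]
The decisive feature here is that the entire $x$-dependence has factored out as $\|x\|^2$: on a norm-bounded set $\{x:\|x\|\le M\}$ the left-hand side is dominated by $M^2\psi(1-e_\l)$, a bound independent of $x$. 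This is precisely what yields \emph{uniform} convergence rather than merely pointwise.

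The step I would treat most carefully, and which I regard as the one genuine ingredient beyond formal manipulation, is showing $\psi(1-e_\l)\to 0$, i.e. $\psi(e_\l)\to 1$. The net $\psi(e_\l)$ is increasing (as $\{e_\l\}$ increases and $\psi\ge 0$) and bounded above by $1$, so it suffices to identify its supremum with $\|\psi\|=1$. For this I would use a second Cauchy--Schwarz argument: for $\|x\|\le 1$ one has $\psi(xe_\l)\to\psi(x)$ by norm-convergence of the approximate identity together with continuity of $\psi$, while $|\psi(xe_\l)|^2\le\psi(xx^*)\,\psi(e_\l^2)\le\sup_\mu\psi(e_\mu)$ using $e_\l^2\le e_\l$ and $\psi(xx^*)\le 1$. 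Passing to the limit and taking the supremum over such $x$ gives $1=\|\psi\|^2\le\sup_\mu\psi(e_\mu)\le 1$, so $\psi(e_\l)\to 1$.

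The remaining cases follow the same pattern. By symmetry, using $\psi(xx^*)\le\|x\|^2$ in place of $\psi(x^*x)$, one obtains $|\psi(x-xe_\l)|^2\le\|x\|^2\,\psi(1-e_\l)$. For the two-sided version I would use the decomposition $x-e_\l x e_\l=(1-e_\l)x+e_\l x(1-e_\l)$ and bound each summand by Cauchy--Schwarz, the second through the estimate $e_\l(xx^*)e_\l\le\|x\|^2 e_\l^2$, arriving at $|\psi(x-e_\l x e_\l)|\le 2\|x\|\,\psi(1-e_\l)^{1/2}$. All three bounds then tend to $0$ uniformly on $\{x:\|x\|\le M\}$, which completes the argument.
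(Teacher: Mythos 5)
The paper never proves this lemma itself --- it is imported verbatim from \cite[Lemma 2.1]{Fuf2023Th} --- so there is no in-paper proof to compare against; your argument must be judged on its own, and it is correct. It is also the standard route: Cauchy--Schwarz for states in the unitalization, the functional-calculus inequality $(1-e_\l)^2\le 1-e_\l$, and the fact that $\psi(e_\l)\to\|\psi\|=1$ (which you reprove from scratch; the paper elsewhere just cites \cite[2.3.11]{BratRob} for this), after which the $x$-dependence factors out as $\|x\|^2$ and uniformity on bounded sets is immediate; the decomposition $x-e_\l xe_\l=(1-e_\l)x+e_\l x(1-e_\l)$ handles the two-sided case correctly. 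One wording should be fixed: the lemma concerns an \emph{arbitrary} approximate identity, so you cannot say it ``may be chosen'' with $0\le e_\l\le 1$; instead note that this bound is either part of the standard definition (as in \cite{BratRob}) or is automatic for an increasing positive net $\{e_\l\}$ with $\|xe_\l-x\|\to 0$ (conjugating $e_\l\ge e_{\l_0}$ by $c=e_{\l_0}$ and letting $\l\to\infty$ gives $c^2\ge c^3$, hence $\|c\|\le1$), and then your estimates go through unchanged.
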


\begin{dfn}
An element $g$ of a $C^*$-algebra $\A$ is called thick if for any $x\in\A$ we have that $gx=0$ implies $x=0$.
\end{dfn}

This definition is inspired by the definition of the thick submodule which is introduced in \cite{Manuilov2022Thick}: a Hilbert $C^*$-submodule $\M\subset\cN$ in a Hilbert $C^*$-module $\cN$ is thick if its orthogonal complement $\M^{\bot}$ is zero, but $\M\ne\cN$.

Note that this condition is satisfied when $g$ is a strictly positive element of a $C^*$-algebra, but this property is not necessary: if an algebra is not $\s$-unital, then it has no strictly positive element, but there may be a thick element; also we can always assume that a thick element is positive, so $xg=0$ implies $x=0$ too (see \cite{Fuf2023Th} for details).

The basic theory on
Hilbert $C^*$-modules
one can find in 
\cite{Lance},\cite{MTBook},\cite{ManuilovTroit2000JMS}. Let us recall some important notions.

\begin{dfn}
\rm
Let $\A$ be a $C^*$-algebra.
A pre-Hilbert $C^*$-module over $\A$
is 
a linear space $\M$ which is
a (right) $\A$-module (with compatible scalar multiplication: $\l(xa)=(\l x)a=x(\l a)$ for $x\in\M$, $a\in\A$, $\l\in\mathbb C$)
equipped with an $\A$-\emph{inner product}
$\<.,.\>:\M\times\M\to \A$ which is a sesquilinear form such that, for any $x,y\in\M$, $a\in\A$ :
\begin{enumerate}
\item $\<x,x\> \ge 0$ and $\<x,x\> = 0$ if and only if $x=0$;
\item $\<y,x\>=\<x,y\>^*$;
\item $\<x,y\cdot a\>=\<x,y\>a$.
\end{enumerate}

On this module a norm $\|x\|=\|\<x,x\>\|^{1/2}$ is defined, and a module is called a \emph{Hilbert $C^*$-module} if it is complete with respect to this norm.

A pre-Hilbert $C^*$-module $\M$ is said to be \emph{countably generated}
if there is a countable collection of its elements such that 
 their $\A$-linear combinations are dense in $\M$.

The \emph{Hilbert sum} of Hilbert
$C^*$-modules, defined in the obvious way, will be denoted by $\oplus$.
\end{dfn}

Any $C^*$-algebra $\A$ can be considered as a module over itself (or over $\dot{\A}$) with the inner product $\<a,b\>=a^*b$.

Let us recall the notion of a frame in a Hilbert $C^*$-module (see, e.g., \cite{FrankLarson1999}, \cite{FrankLarson2002}).
Among all frames, standard ones are also considered.

\begin{dfn}\label{dfn:fr}
Let $\cN$ be a Hilbert $C^*$-module over a unital $C^*$-algebra $\A$ and $J$ be some set. A family $\{x_j\}_{j\in J}$ of elements of $\cN$ is said to be a standard frame in $\cN$ if there exist positive constants $c_1, c_2$ such that
for every $x\in\cN$
the series 
$\sum\limits_j \<x,x_j\>\<x_j,x\>$
converges in norm in $\A$
and the following inequalities hold:

 $$c_1\<x,x\>\le \sum\limits_j \<x,x_j\>\<x_j,x\>\le c_2\<x,x\>.$$
If the series converges only in the ultraweak topology (also known as $\s$-weak, see \cite[Section 2.4.1]{BratRob}), then the frame is said to be non-standard. 
Unlike the case of a standard frame, in this case
the number of nonzero elements of the series can be uncountable, the convergence in this case is considered as the convergence of a net consisting of all finite partial sums (see remarks before \cite[1.2.19]{KadRin1} and \cite[5.1.5]{KadRin1}).
We will write just ``frame'' if it is at least non-standard.
\end{dfn}

$\cN$ is considered as a module over $\dot{\A}$ in the case when $\A$ is not unital, so frames can be defined in $\cN$ as in an $\dot{\A}$-module.

There exists a criterion for non-standard frames:

\begin{lem}\label{fr_cr}
(\cite[Proposition 3.1]{HLi2010}) 
A system $\{x_j\}_J$ is a frame in $\cN$ if and only if there exist positive constants $c_1, c_2$ such that for any $x\in\cN$ and any state $\f$ on $\A$
the following inequalities hold:

$$
c_1\f(\<x,x\>)\le
\sum\limits_j \f(\<x,x_j\>\<x_j,x\>)\le
c_2\f(\<x,x\>)
$$

\end{lem}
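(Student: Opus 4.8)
The plan is to pass between $\A$ and its universal enveloping von Neumann algebra $\A''$ and to use two standard facts: every state on $\A$ extends uniquely to a normal state on $\A''$, this correspondence being a bijection onto the normal states; and for self-adjoint $a,b\in\A''$ one has $a\le b$ if and only if $\f(a)\le\f(b)$ for every normal state $\f$. Throughout, for a state $\f$ on $\A$ I write $\f$ again for its normal extension to $\A''$. I also record that each summand is positive, since $\<x,x_j\>\<x_j,x\>=\<x_j,x\>^*\<x_j,x\>\ge 0$; hence for a fixed state $\f$ the scalar series $\sum_j\f(\<x,x_j\>\<x_j,x\>)$ has nonnegative terms, and its net of finite partial sums is increasing.

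\emph{Forward direction.} Suppose $\{x_j\}_J$ is a frame with bounds $c_1,c_2$. By Definition \ref{dfn:fr} the net of finite partial sums $S_F=\sum_{j\in F}\<x,x_j\>\<x_j,x\>$ converges ultraweakly to some $S\in\A''$ satisfying $c_1\<x,x\>\le S\le c_2\<x,x\>$. Fixing a state $\f$ and using that its normal extension is ultraweakly continuous, I get $\f(S_F)\to\f(S)$; since $\f(S_F)=\sum_{j\in F}\f(\<x,x_j\>\<x_j,x\>)$, this says precisely that the nonnegative series $\sum_j\f(\<x,x_j\>\<x_j,x\>)$ converges with sum $\f(S)$. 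Applying the order-preserving functional $\f$ to the frame inequalities then yields $c_1\f(\<x,x\>)\le\f(S)\le c_2\f(\<x,x\>)$, which is the required estimate.

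\emph{Reverse direction.} Assume the two inequalities hold for all $x\in\cN$ and all states $\f$. For fixed $x$ the net $\{S_F\}$ is increasing and consists of positive elements. For any state $\f$, $\f(S_F)=\sum_{j\in F}\f(\<x,x_j\>\<x_j,x\>)\le c_2\f(\<x,x\>)\le c_2\|\<x,x\>\|$; since the norm of a positive element is the supremum of its values over states, $\|S_F\|\le c_2\|\<x,x\>\|$, so the net is bounded. By monotone completeness of the von Neumann algebra $\A''$, the bounded increasing net $\{S_F\}$ has a least upper bound $S\in\A''$ to which it converges ultraweakly. Normality of the extension of $\f$ gives $\f(S)=\lim_F\f(S_F)=\sum_j\f(\<x,x_j\>\<x_j,x\>)$, so the hypothesis reads $c_1\f(\<x,x\>)\le\f(S)\le c_2\f(\<x,x\>)$ for every normal state $\f$; since normal states detect the order on $\A''$, this is equivalent to $c_1\<x,x\>\le S\le c_2\<x,x\>$. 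Thus the defining conditions of a (non-standard) frame hold.

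The main obstacle I expect is the bookkeeping in the reverse direction: one must verify that the series genuinely converges in the ultraweak topology, not merely that its values on states are controlled, and this is exactly where monotone completeness of $\A''$ and the identification of the ultraweak limit with the supremum of the increasing net are essential. The forward direction is comparatively routine once the state-extension correspondence is in place, and the equivalence between operator inequalities in $\A''$ and their scalar counterparts for all normal states is the bridge that makes the whole argument reversible.
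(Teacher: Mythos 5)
Your proof is correct: the forward direction via ultraweak continuity of the normal extensions of states, and the reverse direction via boundedness of the increasing net of partial sums, monotone completeness of $\A''$, and detection of the operator order by normal states, are exactly the right ingredients, and each standard fact you invoke (states of $\A$ correspond bijectively to normal states of $\A''$; positivity in a von Neumann algebra is detected by normal states) is genuinely true. Note that the paper itself gives no proof of this lemma --- it is quoted from \cite[Proposition 3.1]{HLi2010} --- so there is nothing in-paper to compare against; your argument is the standard one and is essentially the proof in the cited source.
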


One of the most important structural results of the frame theory is the following:
from the results of Frank and Larson (\cite[3.5, 4.1 and 5.3]{FrankLarson2002}, see also \cite[Theorem 1.1]{HLi2010}) it follows that the Hilbert $C^*$-module
$\cN$ over the $C^*$-algebra $\A$ can be represented as an orthogonal direct summand in the standard module of some cardinality over the unitalization algebra $\bigoplus\limits_{\l\in\Lambda}\dot{\A}$ if and only if there exists a standard frame in $\cN$.
Combining this fact with the Kasparov's stabilization theorem (\cite{Kasp}, or \cite[Theorem 1.4.2]{MTBook}) one can conclude that every countably generated module has a standard frame.

\section{Local units}

\begin{dfn}
A self-adjoint element $h$ in a $C^*$-algebra $\A$ is called a local unit for an element $a\in\A$ if $ah=ha=a$. 
\end{dfn}

Note that we don't require the local unit to be idempotent, since for the theory of $C^*$-algebras this is more convenient from a technical point of view. Moreover, often a $C^*$-algebra has very few idempotents. However, instead of local units of an element we can consider the minimal projection corresponding to this element which is an idempotent and which lies in the universal enveloping von Neumann algebra $\A''$, but since $\A''$ is always unital we anyway must majorize these idempotents by some element of $\A$ which may not be idempotent.

Due to the following theorem we can always assume that for a local unit $h$ the inequalities  $0\le h \le1$ hold.

\begin{prop}
If for $a\in\A$ there exists a local unit $h$, then for $a$ there exists a local unit $h'\in\A$ such that $0\le h' \le1$. 
Moreover, for any continuous function $f:\mathbb{R}\to\mathbb{R}$ such that $f(1)=1$ and $f(0)=0$ we have that $f(h)$ is a local unit for $a$ too.
\end{prop}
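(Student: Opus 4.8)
The plan is to reduce both assertions to a single functional-calculus identity, namely that $a\,f(g)=f(1)\,a=f(g)\,a$ for \emph{every} continuous real-valued $f$, and then to choose $f$ appropriately. First I would record the elementary consequences of $ag=ga=a$: iterating gives $ag^n=a$ and $g^na=a$ for all $n\ge1$, whence for any polynomial $p(t)=\sum_{k=0}^m c_k t^k$ one has $a\,p(g)=\sum_k c_k\,ag^k=\bigl(\sum_k c_k\bigr)a=p(1)\,a$, and symmetrically $p(g)\,a=p(1)\,a$ (here $p(g)$ is formed in the unitalization $\dot{\A}$).

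Next I would pass from polynomials to an arbitrary continuous $f$ using continuity of the continuous functional calculus. On the compact set $\sigma(g)\subset\mathbb{R}$ choose polynomials $p_n\to f$ uniformly; then $\|p_n(g)-f(g)\|\to0$, so $a\,p_n(g)\to a\,f(g)$ and $p_n(g)\,a\to f(g)\,a$. To conclude $a\,f(g)=f(1)\,a$ I need $p_n(1)\to f(1)$, which requires $1\in\sigma(g)$: if $a\ne0$ then $a(1-g)=0$ shows $1-g$ is not invertible in $\dot{\A}$, so indeed $1\in\sigma(g)$ and uniform convergence on $\sigma(g)$ forces $p_n(1)\to f(1)$ (the case $a=0$ being trivial). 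This yields $a\,f(g)=f(1)\,a=f(g)\,a$ for all continuous $f$.

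With this identity in hand both claims are immediate. For the functional-calculus assertion, if $f(1)=1$ then $a\,f(g)=a=f(g)\,a$, and $f(g)$ is self-adjoint because $f$ is real-valued; hence $f(g)$ is again a local unit for $a$. For the existence of $g'$ with $0\le g'\le1$, I would fix one concrete $f$ that is continuous, real-valued, satisfies $f(1)=1$ and $0\le f\le1$ on all of $\mathbb{R}$ (for instance $f(t)=\min\{1,\max\{0,t\}\}$); then $g'=f(g)$ obeys $0\le g'\le1$ by positivity of the functional calculus, and is a local unit by the identity above.

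The one point requiring genuine care is \emph{membership}: $f(g)$ must lie in $\A$ and not merely in $\dot{\A}$. When $\A$ is non-unital, $g\in\A$ forces $0\in\sigma(g)$, since no element of a closed proper ideal is invertible in $\dot{\A}$; consequently $f(g)\in\A$ precisely when $f(0)=0$. I expect this to be the main technical obstacle, and I would resolve it for the existence part by additionally requiring $f(0)=0$ in the choice above, which is fully compatible with $f(1)=1$ and $0\le f\le1$, so that the resulting $g'=f(g)$ genuinely belongs to $\A$.
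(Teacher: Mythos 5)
Your proposal is correct, and its core is the same as the paper's: both proofs reduce everything to the polynomial identity $a\,p(g)=p(1)\,a=p(g)\,a$ (coming from $ag^{n}=g^{n}a=a$), pass to a general continuous $f$ by uniform polynomial approximation on $\sigma(g)$, and finally specialize to the clamp function $f(t)=\min\{1,\max\{0,t\}\}$ to obtain $0\le g'\le 1$. You differ in two details, and both are refinements rather than mere restatements. First, the control of the values at $t=1$: the paper forces $f_n(1)=1$ by replacing an approximating sequence $\tilde f_n$ with $f_n=\tilde f_n-\tilde f_n(1)+1$, but this preserves uniform convergence on $\sigma(g)$ only if $\tilde f_n(1)\to 1$, which is automatic only when $1\in\sigma(g)$ (or when one approximates on a larger compact set containing $1$). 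You instead prove outright that $1\in\sigma(g)$ whenever $a\ne 0$, since $a(1-g)=0$ forbids $1-g$ from being invertible in $\dot{\A}$; this observation closes that small gap in either version of the argument. Second, you raise the membership question $f(g)\in\A$ versus $f(g)\in\dot{\A}$, which the paper never addresses: in a non-unital algebra one has $0\in\sigma(g)$ and $f(g)\in\A$ exactly when $f(0)=0$. The paper's clamp function happens to satisfy $f(0)=0$, so its $g'$ does land in $\A$, but your remark shows that the ``Moreover'' clause of the statement, read literally against the paper's definition of a local unit as an element of $\A$, needs the extra hypothesis $f(0)=0$ (or unitality of $\A$). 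On both points your write-up is the more careful one.
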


\begin{proof}
Set $h'=f(h)$. 
There exists a sequence $\{f_n\}$ of polynomials, $f_n(t)=\sum\limits_{k=0}^np_{n,k}t^k$, that converges to $f$ uniformly on $\s(h)$ and that satisfies the conditions $f_n(1)=1$ and $f(0)=0$ (it suffices to take any convergent sequence $\tilde{f}_n$ and define $f_n(t)=\tilde{f}_n(t)-(1-t)\tilde{f}_n(0)-t(\tilde{f}_n(1)-1)$), i.e. $\sum\limits_{k=0}^np_{n,k}=1$ and $p_{n,0}=0$. $f_n(h)\in\A$ since $f_n(0)=0$. So, we have
$$
ah'=a\lim\limits_{n\to\infty}f_n(h)=a\lim\limits_{n\to\infty}\sum\limits_{k=0}^np_{n,k}h^k=
\lim\limits_{n\to\infty}\sum\limits_{k=0}^np_{n,k}ah^k=\lim\limits_{n\to\infty}\sum\limits_{k=0}^np_{n,k}a=a.
$$
Similarly, $h'a=a$.

If we take $f$ such that
$$
f(t)=\begin{cases}
0,\ t\le0\\
t,\ t\in[0,1]\\
1,\ t\ge1.
\end{cases}
$$
then we have that $0\le h' \le1$.

\end{proof}

\begin{rk}\label{gfuncta}
By the same argument we have that a local unit for $a$ is also a local unit for $f(a)$ for any continuous on $\s(a)$ function $f$ such that $f(0)=0$.
\end{rk}

\begin{prop}\label{majlun}
Let $\A$ be a $C^*$-algebra.
\begin{enumerate}
\item[1)] Suppose that $0\le b \le a$ and $h$ is a local unit for $a$. Then $h$ is a local unit for $b$.
\item[2)] If $h$ is a local unit for $\sum\limits_{j=1}^{n}a_j$, $a_j\ge0$, then $h$ is a local unit for every $a_j$, $j=1,\dots,n$.
\item[3)] If $h$ is a local unit for $\sum\limits_{j=1}^{\infty}\frac{1}{2^j}\frac{a_j}{1+||a_j||}$, $a_j\ge0$, then $h$ is a local unit for every $a_j$, $j\in\N$.
\item[4)] If $||a_j||\le1$, $j\in\N$ and $h$ is a local unit for $\sum\limits_{j=1}^{\infty}\frac{1}{2^j}a_j$, $a_j\ge0$, then $h$ is a local unit for every $a_j$, $j\in\N$.
\end{enumerate}
\end{prop}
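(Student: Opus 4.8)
The plan is to prove part 1) directly and to deduce parts 2)--4) from it together with the positivity of the summands.

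For part 1), by the previous proposition I may assume $0\le g\le 1$, so that $1-g$ is a self-adjoint element of the unitalization $\dot{\A}$. From $ag=ga=a$ I get $a(1-g)=a-ag=0$ and likewise $(1-g)a=0$. Since $b\le a$, conjugation by the self-adjoint element $1-g$ preserves the order, hence
$$0\le (1-g)\,b\,(1-g)\le (1-g)\,a\,(1-g)=(1-g)\cdot 0=0,$$
so $(1-g)b(1-g)=0$. Writing $b=(b^{1/2})^2$, this says $[b^{1/2}(1-g)]^*[b^{1/2}(1-g)]=0$, and therefore by the $C^*$-identity $b^{1/2}(1-g)=0$. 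Multiplying on the left by $b^{1/2}$ gives $b(1-g)=0$, i.e. $bg=b$; taking adjoints (all elements involved are self-adjoint) yields $gb=b$. Thus $g$ is a local unit for $b$. The key point --- and the only place where real work happens --- is this reduction of the scalar identity $(1-g)b(1-g)=0$ to the operator identity $b(1-g)=0$ via the positive square root; everything else in part 1) is bookkeeping.

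For parts 2)--4) I would argue uniformly. In each case the element for which $g$ is assumed to be a local unit is a sum of positive terms $t_j\ge 0$ with total $s$, and in 3) and 4) the norm convergence is guaranteed by the weights, since $\|\tfrac{1}{2^j}\tfrac{a_j}{1+\|a_j\|}\|\le 2^{-j}$ and $\|\tfrac{1}{2^j}a_j\|\le 2^{-j}$ respectively. Because the positive cone is closed, $s-t_k=\sum_{j\ne k}t_j\ge 0$, so $0\le t_k\le s$ for every $k$. Applying part 1) with $b=t_k$ and $a=s$ shows that $g$ is a local unit for each $t_k$, and since $t_k$ is a positive scalar multiple of $a_k$ and local units are plainly unaffected by nonzero scalar multiplication, $g$ is a local unit for every $a_k$. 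Part 2) is just the finite case, where $0\le a_k\le\sum_{j=1}^n a_j$ holds directly.

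I expect no serious obstacle beyond the square-root argument in part 1). The subtlety worth stating explicitly is that the domination $t_k\le s$ must be read in the $C^*$-algebra order and relies on closedness of the positive cone to pass from the partial sums to the full series.
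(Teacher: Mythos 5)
Your proposal is correct, and your proof of part 1) takes a genuinely different route from the paper's. The paper argues via Pedersen's factorization theorem (\cite[Proposition 1.4.5]{Pedersen}): since $0\le b\le a$, for $\a\in(0,1/2)$ one can write $\sqrt{b}=ua^{\a}$ with $u\in\A$, and since a local unit for $a$ is also a local unit for $a^{\a}$ (continuous functional calculus with $f(0)=0$), one gets $bg=ua^{\a}ua^{\a}g=ua^{\a}ua^{\a}=b$, and $gb=b$ by taking adjoints. You instead work in the unitalization: conjugating $0\le b\le a$ by the self-adjoint element $1-g$ gives $(1-g)b(1-g)\le(1-g)a(1-g)=0$, and then the $C^*$-identity applied to $b^{1/2}(1-g)$ yields $b^{1/2}(1-g)=0$, hence $b(1-g)=0$. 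Your argument is more elementary and self-contained --- it needs no external factorization result, only positivity of conjugation and the $C^*$-identity --- at the negligible cost of passing to $\dot{\A}$. Your handling of 2)--4) (closedness of the positive cone to get $0\le t_k\le s$, then invariance of local units under positive scalar multiples) is exactly the deduction the paper leaves implicit with the word ``obviously.''

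One quibble: the opening reduction ``I may assume $0\le g\le1$'' is not a legitimate reduction here. The claim concerns the given $g$; replacing it by $g'=f(g)$ as in the previous proposition would only show that $g'$ is a local unit for $b$, which does not formally imply the same for $g$. Fortunately, your argument never actually uses this assumption: all it needs is that $1-g$ is self-adjoint, which holds because local units are self-adjoint by definition. Delete that sentence and the proof stands verbatim.
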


\begin{proof}
Obviously, 2)-4) follow from 1), so let us prove 1).

By \cite[Proposition 1.4.5]{Pedersen}, for any $\a\in(0,1/2)$ there exists $u\in\A$ such that $\sqrt{b}=ua^{\a}$.
Note that due to Remark \ref{gfuncta} $h$ is also a local unit for $a^{\a}$. So $b=ua^{\a}ua^{\a}$ and $bh=ua^{\a}ua^{\a}h=ua^{\a}ua^{\a}=b$. By applying the conjugation, $b=a^{\a}u^*a^{\a}u^*$ and, similarly, $hb=b$.
\end{proof}

Recall (\cite[Theorem 2.2]{Fuf2023Th}) that for any state $\f$ on $\A$, there exists a positive $g\in\A$ such that $gx = 0$ implies $\f(x) = 0$ for any $x\in\A$ (by applying the conjugation it is easy to see that $xg=0$ implies $\f(x) = 0$ too). We will call such an element $g$ a support of $\f$ (we do not require that a support must be minimal).

\begin{lem}\label{fah}
Let $\A$ be a $C^*$-algebra, $\f$ a state on $\A$ and $g$ a support of $\f$. If $g$ has a local unit $h$, then for any $a\in\A$ we have $\f(a)=\f(ah)=\f(ha)=\f(hah)$. In the unitalization $\dot{\A}$ we also have $\f(a(1-h))=\f((1-h)a)=0$.
\end{lem}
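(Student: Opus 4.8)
The plan is to reduce all four assertions to the single defining property of the support $g$: namely that for $x\in\A$, either $gx=0$ or $xg=0$ implies $\f(x)=0$. The whole proof is essentially a pair of one-line algebraic identities plus this property, so I do not expect a serious obstacle; the only point requiring care is a bookkeeping check about where elements live.

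First I would (harmlessly) normalize $h$ so that $0\le h\le 1$ by the preceding proposition, although this is not actually needed for the computation. The crucial observation is the pair of identities $(1-h)g = g - hg = 0$ and $g(1-h) = g - gh = 0$, which follow at once from the local-unit relations $hg=gh=g$. Note that $1-h$ only lives in the unitalization $\dot{\A}$, so these products are computed in $\dot{\A}$; but the conclusions $(1-h)g=0$ and $g(1-h)=0$ are statements about elements of $\A$.

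Now fix an arbitrary $a\in\A$ and set $x = a(1-h) = a - ah$, which lies in $\A$ since both $a$ and $ah$ do. Multiplying on the right by $g$ gives $xg = a\bigl((1-h)g\bigr) = 0$, so the support property yields $\f(x)=0$, that is, $\f(a(1-h))=\f(a)-\f(ah)=0$ and hence $\f(a)=\f(ah)$. Symmetrically, with $y=(1-h)a = a-ha\in\A$ we compute $gy = \bigl(g(1-h)\bigr)a = 0$, so $\f((1-h)a)=0$ and $\f(a)=\f(ha)$. These two computations already establish the two vanishing statements in $\dot{\A}$ as well as the equalities $\f(a)=\f(ah)$ and $\f(a)=\f(ha)$.

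It remains to get $\f(a)=\f(hah)$. Here I would simply reapply the already proved identity $\f(b)=\f(bh)$ (valid for every $b\in\A$) to the element $b=ha$, obtaining $\f(ha)=\f(hah)$; combining this with $\f(a)=\f(ha)$ gives $\f(a)=\f(hah)$, completing all four equalities. The nearest thing to a subtlety is that the support property is only asserted for arguments in $\A$, so one must verify that $a(1-h)$ and $(1-h)a$ really belong to $\A$ before invoking it — which they do, each being a difference of two elements of $\A$ — even though $1-h$ itself belongs only to $\dot{\A}$.
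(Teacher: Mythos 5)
Your proof is correct and follows essentially the same route as the paper: the key identity $a(1-h)g = ag - ahg = 0$ combined with the support property of $g$ yields $\f(a(1-h))=0$, and symmetrically $\f((1-h)a)=0$, with $\f(hah)$ obtained by iterating. The only cosmetic difference is that the paper first extends $\f$ to the unitalization $\dot{\A}$ (citing uniqueness of that extension), whereas you keep the functional on $\A$ by noting that $a(1-h)=a-ah$ and $(1-h)a=a-ha$ already lie in $\A$ --- both are valid ways of handling the same bookkeeping point.
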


\begin{proof}
Every state has a unique extension to the unitalization $\dot{\A}$ (\cite[2.3.13]{BratRob}), so we can consider $\f$ as a state on $\dot{\A}$.
For any $a\in\A$ we have
$$
\f(a)=\f(ah)+\f(a(1-h)).
$$
Note that $a(1-h)g=ag-ahg=ag-ag=0$, so $\f(a(1-h))=0$ since $g$ is a support for $\f$. So, $
\f(a)=\f(ah)$, and, similarly, $\f(a)=\f(ha)$.
\end{proof}

\begin{lem}
In a unital $C^*$-algebra $\A$ a local unit for a thick element (or a strictly positive element) coincides with the unit of the algebra.
\end{lem}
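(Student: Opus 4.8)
The plan is to pit thickness directly against the element $h-1$. Since $\A$ is assumed unital, its unit $1$ lies in $\A$, and therefore so does the difference $h-1$, where $h$ is the local unit in question. First I would compute $g(h-1)=gh-g$; because $h$ is a local unit for $g$ we have $gh=g$ by definition, so the difference collapses to $g(h-1)=g-g=0$. Now I invoke the defining property of a thick element: $gx=0$ implies $x=0$ for every $x\in\A$. Applying this with $x=h-1$ yields $h-1=0$, that is, $h=1$, which is exactly the claim.

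The case of a strictly positive $g$ requires no separate argument, since, as noted in \S1, a strictly positive element is automatically thick; thus the computation above applies verbatim. If one prefers to work on the other side, one may instead use the conjugate identity $(h-1)g=hg-g=0$ together with the observation that for a positive thick element $xg=0$ likewise forces $x=0$.

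There is no genuine obstacle here: the only point demanding the slightest attention is that the cancellation $gh-g\cdot 1=g-g$ is being carried out inside the unital algebra, so that $g\cdot 1=g$ is meaningful and $h-1$ is a bona fide element of $\A$ on which the thickness hypothesis can be brought to bear. Once the unital setting is fixed, the argument is a one-line application of the definition.
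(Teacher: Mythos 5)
Your proof is correct, and it takes a slightly but genuinely different route from the paper's. You test thickness against the single element $h-1$, which exists precisely because $\A$ is unital: $g(h-1)=gh-g=0$, so thickness (in its definitional form $gx=0\Rightarrow x=0$) gives $h=1$ in one stroke. The paper instead never forms $h-1$: it takes an arbitrary $y\in\A$, computes $(y-yh)g=yg-yhg=0$ using $hg=g$, and concludes $y=yh$ (and symmetrically $y=hy$) from the conjugated thickness property $xg=0\Rightarrow x=0$, which holds for positive thick elements; thus $h$ acts as a unit on every element. Your version is shorter, but the paper's version buys something you lose: since its computation never invokes $1$, it proves the stronger fact that in \emph{any} $C^*$-algebra (unital or not) a local unit for a thick or strictly positive element is automatically a global unit, and this is exactly what is needed to deduce Corollary \ref{luneth} (that in a non-unital algebra an element with a local unit cannot be thick). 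As written, your argument cannot be run in a non-unital algebra, since $h-1$ only lives in $\dot\A$ and thickness is a hypothesis about elements of $\A$; to recover the corollary along your lines one would need an extra step, e.g.\ observing that $g\bigl((h-1)y\bigr)=0$ with $(h-1)y\in\A$ for every $y\in\A$, which in effect reproduces the paper's computation. Your appeal to the fact that strictly positive elements satisfy the thickness condition is legitimate — the paper states this in \S 1 — and your remark about the conjugate identity $(h-1)g=0$ is exactly the form of thickness the paper itself uses.
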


\begin{proof}
Suppose that $g\in\A$ is thick or strictly positive, i.e. $g\ge0$ and $gx=0$ implies $x=0$ for any $x\in\A$. Let $h$ be a local unit for $g$. Take an arbitrary $y\in\A$. Consider $y-yh$. We have $(y-yh)g=yg-yhg=yg-yg=0$, so $y-yh=0$, i.e. $y=yh$ since $g$ is thick or strictly positive. Similarly (by considering $y-hy$) we have that $y=hy$. So, it holds for an arbitrary $y\in \A$, hence $h$ is the unit of $\A$.
\end{proof}

\begin{cor}\label{luneth}
In a non-unital $C^*$-algebra $\A$ an element with local unit cannot be thick (or strictly positive).
\end{cor}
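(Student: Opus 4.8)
The plan is to argue by contradiction and reduce the statement immediately to the preceding lemma. Suppose, toward a contradiction, that the non-unital algebra $\A$ nevertheless contains an element $g$ which is thick (or strictly positive) and which admits a local unit $h$, so that $gh=hg=g$. I would then show that $h$ is forced to be a genuine two-sided identity for all of $\A$, which is impossible because $\A$ is non-unital; this contradiction yields the claim.

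The core computation is exactly the one already carried out in the proof of the preceding lemma, and I would simply re-run it. For an arbitrary $y\in\A$ consider $y-yh$; using the relation $hg=g$ one computes $(y-yh)g=yg-y(hg)=yg-yg=0$, and since a thick element may be taken positive (so that $xg=0$ implies $x=0$) this gives $y-yh=0$, i.e. $y=yh$. Symmetrically, from $g(y-hy)=gy-(gh)y=gy-gy=0$ together with thickness ($gx=0$ implies $x=0$) one obtains $y=hy$. As $y\in\A$ was arbitrary, $h$ is a two-sided identity for $\A$.

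The only point worth emphasizing is that the argument of the preceding lemma never actually used the hypothesis that the ambient algebra is unital: it \emph{constructs} a unit out of $h$ rather than merely identifying $h$ with a pre-existing one. Hence the existence of a thick (or strictly positive) element possessing a local unit is flatly incompatible with non-unitality, which is precisely the assertion. There is no genuine obstacle here; the statement is a direct corollary, and the only thing to get right is the bookkeeping of the one-sided relations $hg=g$ and $gh=g$, so that the two computations deliver $y=yh$ and $y=hy$ respectively and $h$ is seen to act as an identity on both sides.
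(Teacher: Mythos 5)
Your proposal is correct and is exactly the paper's intended argument: the computation $(y-yh)g=0$ and $g(y-hy)=0$ from the preceding lemma, whose proof never uses unitality, forces the local unit $h$ to be a two-sided identity for all of $\A$, contradicting non-unitality. You also correctly handle the one subtlety (using positivity of the thick element so that both $gx=0\Rightarrow x=0$ and $xg=0\Rightarrow x=0$ are available), which the paper likewise relies on.
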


\begin{prop}\label{herzero}
Let $\A$ be a $C^*$-algebra and suppose that $x\in\A$ has a local unit $h\in\A$. If for some state $\f$ on $\A$ we have that $\f(h)=0$, then $\f(x)=0$.
\end{prop}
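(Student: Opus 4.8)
The plan is to exploit the Cauchy--Schwarz inequality for the state $\f$, using the local unit relation $xh=x$ to reduce $\f(x)$ to $\f(xh)$ and then controlling the latter by $\f(h^2)$, which I will argue vanishes.

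First I would normalize the local unit. By the Proposition following the definition of local units we may assume $0\le h\le1$, and I would record this at the outset. Under this assumption $h^2\le h$: by the continuous functional calculus $h-h^2=\psi(h)$ with $\psi(t)=t(1-t)\ge0$ on $\s(h)\subset[0,1]$, so $h-h^2\ge0$ in $\A$. Since $\f$ is positive this gives $\f(h^2)\le\f(h)=0$, and as $h^2\ge0$ forces $\f(h^2)\ge0$, I conclude $\f(h^2)=0$. This step is where the hypothesis $\f(h)=0$ is actually consumed.

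Next, since $h$ is a local unit for $x$ we have $xh=x$, hence $\f(x)=\f(xh)$. I would then apply the Cauchy--Schwarz inequality for states, $|\f(a^*b)|^2\le\f(a^*a)\f(b^*b)$, with $a=x^*$ and $b=h$, which yields $|\f(x)|^2=|\f(xh)|^2\le\f(xx^*)\,\f(h^2)=0$, and therefore $\f(x)=0$. All elements involved ($x,h,xh,xx^*,h^2$) lie in $\A$, so no passage to the unitalization is required.

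The only delicate point is the normalization $0\le h\le1$: this is precisely what forces $h^2\le h$ and hence $\f(h^2)=0$. Without it a merely self-adjoint local unit with $\f(h)=0$ need not have $\f(h^2)=0$, so I would be careful to invoke the earlier Proposition before estimating $\f(h^2)$; the rest is a routine Cauchy--Schwarz bound.
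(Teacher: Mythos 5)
Your proof is correct and rests on the same key tool as the paper's: the Cauchy--Schwarz inequality for states applied to $\f(x)=\f(xh)$. The difference is only in the factorization. The paper writes $xh=(x\sqrt{h})\sqrt{h}$ and gets $|\f(x)|^2\le\f(xhx^*)\f(h)=0$, so the hypothesis $\f(h)=0$ is consumed directly and only positivity of $h$ is needed; you apply Cauchy--Schwarz to the pair $(x^*,h)$ itself, getting the bound $\f(xx^*)\f(h^2)$, which costs you the extra step $h^2\le h$ and hence the stronger normalization $0\le h\le 1$. (If you wanted to avoid the upper bound $h\le 1$, note that $h^2\le\|h\|\,h$ for any $h\ge 0$, which gives $\f(h^2)\le\|h\|\f(h)=0$ just as well.)

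One caveat on your reduction step. ``We may assume $0\le h\le 1$'' is not a legitimate move \emph{inside} this proof: the hypothesis $\f(h)=0$ is about the given $h$, and replacing $h$ by $h'=f(h)$ need not preserve it, since $\f(h')$ can be nonzero. Worse, the proposition is simply false for merely self-adjoint local units: in $\A=M_2(\mathbb{C})$ take $x=\mathrm{diag}(1,0)$, $h=\mathrm{diag}(1,-1)$ and $\f=\frac12\mathrm{tr}$; then $xh=hx=x$ and $\f(h)=0$, but $\f(x)=\frac12$. So positivity of $h$ must be read as part of the hypothesis --- this is the paper's standing convention that local units are taken with $0\le h\le1$, and the paper's own proof relies on it implicitly the moment it writes $\sqrt{h}$ --- rather than something you can arrange after the state has been fixed. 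With that reading, your argument goes through as written.
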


\begin{proof}
By the Cauchy-Schwarz inequality (\cite[2.3.10]{BratRob}) for states we have
$$
|\f(x)|^2=|\f(xh)|^2=|\f(x\sqrt{h}\sqrt{h})|^2\le|\f(x\sqrt{h}(x\sqrt{h})^*)\f(\sqrt{h}\sqrt{h})|=0,
$$
so $\f(x)=0$.
\end{proof}

\section{Locally unital algebras}

\begin{dfn}
We will say that a $C^*$-algebra $\A$ is locally unital if every $a\in\A$ has a local unit. 
\end{dfn}

\begin{rk}
From Proposition \ref{majlun} it follows that in such algebras there exists a common local unit for any, at most countable, set of elements.
\end{rk}

The following theorem gives a description of the class of algebras of interest to us --- locally unital but not unital --- in the commutative case in terms of the underlying topological space.

\begin{teo}\label{commal}
Let $\A$ be a commutative $C^*$-algebra, i.e. $\A\cong C_0(K)$, where $K$ is a locally compact Hausdorff space. Then the following statements are equivalent:
\begin{enumerate}
\item[1)] $C_0(K)$ is locally unital, but not unital.
\item[2)] $K\in\KIV$, i.e. any $\s$-compact subset in $K$ is precompact.
\item[3)] Any function $f\in C_0(K)$ vanishes outside some compact set which depends on $f$ (i.e. $f$ has a compact support).
\end{enumerate}
\end{teo}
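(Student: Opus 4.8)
The plan is to reduce the whole statement to one dictionary entry: for a function $f\in C_0(K)$, the existence of a local unit is equivalent to $f$ having compact support. First I would prove this equivalence. If $g$ is a local unit for $f$, then in the commutative picture $g$ is real-valued and $g(x)f(x)=f(x)$ for all $x$, so $g\equiv 1$ on $\{f\ne 0\}$ and, by continuity, on $\mathrm{supp}\,f$. Since $g\in C_0(K)$ vanishes at infinity, the set $\{x:|g(x)|\ge 1/2\}$ is compact and contains $\mathrm{supp}\,f$, which is therefore compact; this is exactly the ``vanishes outside a compact'' condition of (3). Conversely, if $\mathrm{supp}\,f$ lies in a compact set $C$, Urysohn's lemma for locally compact Hausdorff spaces produces $g\in C_c(K)$ with $0\le g\le 1$ and $g\equiv 1$ on $C$, which is then a local unit for $f$. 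This already yields that $C_0(K)$ is locally unital if and only if statement (3) holds.

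Next I would establish that (3) is equivalent to the $\s$-compact/precompact condition figuring in (2). For one direction, note that for any $f\in C_0(K)$ the set $\{f\ne 0\}=\bigcup_n\{|f|\ge 1/n\}$ is $\s$-compact, so if every $\s$-compact subset is precompact then $\mathrm{supp}\,f$ is compact, giving (3). For the converse, given a $\s$-compact set $S=\bigcup_n C_n$ with each $C_n$ compact, I would pick via Urysohn functions $g_n\in C_c(K)$ with $0\le g_n\le 1$ and $g_n\equiv 1$ on $C_n$, and set $f=\sum_n 2^{-n}g_n$. The series converges uniformly, so $f\in C_0(K)$, and $f>0$ on every $C_n$, hence on $S$; thus $S\subseteq\mathrm{supp}\,f$, which is compact by (3), so $S$ is precompact. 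Consequently (3) holds if and only if every $\s$-compact subset of $K$ is precompact.

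Finally I would assemble the full equivalence, handling the unital/non-unital clause through the standard fact that $C_0(K)$ is unital exactly when $K$ is compact. For (1)$\Rightarrow$(2): local unitality gives (3), equivalently ``every $\s$-compact subset is precompact,'' while non-unitality gives that $K$ is non-compact; were $K$ itself $\s$-compact it would then be a $\s$-compact, hence precompact, hence compact, subset of itself, a contradiction, so $K$ is non-$\s$-compact and thus lies in $\KIV$. For (2)$\Rightarrow$(1): membership in $\KIV$ forces $K$ to be non-$\s$-compact (compact and, more generally, $\s$-compact spaces sit in $\KI$), hence non-compact and $C_0(K)$ non-unital, while the precompactness content yields (3) and thereby local unitality. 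Combined with the equivalence of (3) with local unitality and with the precompactness condition from the previous steps, this closes the chain.

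The step I expect to be the main obstacle is precisely this last bookkeeping, rather than any analytic difficulty: the bare ``compact support'' condition (3) and the precompactness condition are insensitive to whether $K$ itself is compact, whereas the class $\KIV$ and the non-unital clause of (1) implicitly exclude the compact (indeed $\s$-compact) case. Care is therefore needed to route the argument through the one-point-compactification reading of precompactness and through the convention that $\KIV$ consists of non-$\s$-compact spaces, so that the degenerate unital case is correctly separated out; by contrast, the genuinely analytic content is only the routine Urysohn construction used in both directions of the equivalence between (2) and (3).
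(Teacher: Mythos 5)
Your proposal is correct, and its core coincides with the paper's own argument: the dictionary ``$f$ has a local unit $\Leftrightarrow$ $f$ vanishes outside a compact set,'' proved exactly as you do it (a self-adjoint $g$ with $fg=f$ must equal $1$ on $\{f\ne0\}$, hence on $\operatorname{supp}f$, which is then trapped in the compact set $\{|g|\ge 1/2\}$; conversely a function equal to $1$ on a compact set containing $\operatorname{supp}f$ is a local unit). The differences lie in what is proved versus cited, and in the bookkeeping. The paper outsources the equivalence 2) $\Leftrightarrow$ 3) to \cite[Lemma 6]{Fuf2022Path} and the existence of the Urysohn-type function to \cite[Corollary 1.3]{Fuf2021faa}, whereas you prove both from scratch; your construction $f=\sum_n 2^{-n}g_n$ for ``3) implies every $\s$-compact subset is precompact'' is exactly the standard argument, so your version is self-contained at the cost of some length. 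More substantively, your concern about the degenerate case is well placed, and your treatment of it is \emph{more} careful than the paper's: the paper's proof of 3) $\Rightarrow$ 1) only produces local units and never explains why $C_0(K)$ fails to be unital --- and indeed, as literally stated, 3) does not imply 1) when $K$ is compact, since then 3) holds vacuously while $C_0(K)=C(K)$ is unital. Your routing --- proving 1) $\Leftrightarrow$ 2) directly, threading ``non-unital $\Leftrightarrow$ non-compact'' through the argument and using that $\KIV$ excludes $\s$-compact (in particular compact) spaces --- is the right way to close the chain, and it quietly repairs an imprecision that the paper's proof leaves unaddressed.
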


\begin{proof}
Implications 2) $\Leftrightarrow$ 3) were proved in \cite[Lemma 6]{Fuf2022Path}.

1) $\Rightarrow$ 3). Let $f\in C_0(K)$ be any function. There exists a local unit $h\in C_0(K)$ for $f$, i.e. $f(t)h(t)=f(t)$ for any $t\in K$. So, if $f(t)\ne0$, then $h(t)=1$. Hence, $\{t\in K: f(t)\ne0\}\subset\{t\in K: h(t)=1\}$, but the last set is compact. So, $f$ vanishes outside this compact set.

3) $\Rightarrow$ 1) Let $f\in C_0(K)$ be a function that vanishes outside some compact $K'\subset K$. Then by \cite[Corollary 1.3]{Fuf2021faa} there exists $h\in C_0(K)$ such that $h(t)=1$ for $t\in K'$. Then $f(t)h(t)=f(t)$ for any $t\in K$, i.e. $h$ is a local unit for $f$.
\end{proof}

\begin{rk}\label{commalf}
In fact we prove that an element of $C_0(K)$ has a local unit if and only if it has a compact support as a function.
\end{rk}

One of the simplest example of such topological space is 
the space of all at most countable ordinals $[0, \omega_1)$ with the order topology. This space and its properties were discussed in detail in \cite{Fuf2021faa}.

Another commutative example can be constructed as follows. Let $\L$ be a discrete topological space, $\card(\L)=\g$, where $\g$ is some uncountable cardinal, let $\a$ be an infinite cardinal such that $\a<\g$. 
Consider the set of all bounded functions on $\L$, $C_b(\L)$ (obviously they all are continuous). Then consider the subset of all functions whose support has cardinality at most $\a$, denoted by $C_{b,\a}(\L)$. 
It is easy to see that $C_{b,\a}(\L)$ is a non-unital locally unital $C^*$-algebra. The simplest case is an algebra of bounded functions with at most countable support on an uncountable discrete space.

It is interesting to give a geometric description for the space of characters of $C_{b,\a}(\L)$, i.e. to describe a space $K_{\L,\a}$ such that  $C_{b,\a}(\L)\cong C_0(K_{\L,\a})$ (see \cite[2.1.11A]{BratRob}). Note that since $C_{b,\a}(\L)$ is a closed ideal in $C_{b}(\L)\cong C(\be\L)$ it follows that $K_{\L,\a}$ is an open subset of the Stone-\v{C}ech compactification of $\L$, $\be\L$ (which may be considered as the space of characters on $C_{b}(\L)$), and for any $f\in C_b(\L)$ and any character $p$ we have $p(f)=\widehat{f}(p)$, where $\widehat{f}$ is the extension of $f$ on $\be\L$. Also note that if $S\subset\L$, then $\be S\subset\be\L$ (see \cite[Section 1]{Walker}).

\begin{teo}
$K_{\L,\a}=
\bigcup\limits_{{
\substack{S\subset\L \\  \card(S)\le\a                     }
}}\be S\subset \be\L,
$ i.e.
$C_{b,\a}(\L)\cong
C_0(\bigcup\limits_{{
\substack{S\subset\L \\  \card(S)\le\a                     }
}}\be S)
$.
\end{teo}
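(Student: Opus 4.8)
The plan is to use the standard bijective correspondence between closed ideals of $C(X)$, for $X$ compact Hausdorff, and closed subsets of $X$: every closed ideal $I$ equals $I_F=\{f\in C(X):f|_F=0\}$ for the closed set $F=\{p\in X:f(p)=0\ \text{for all}\ f\in I\}$, and then $I\cong C_0(X\setminus F)$, so that the character space of $I$ is $X\setminus F$. Since $C_{b,\a}(\L)$ is a closed ideal of $C_b(\L)\cong C(\be\L)$, it is therefore enough to compute its hull $F\subset\be\L$ and to prove that $\be\L\setminus F=U$, where I write $U:=\bigcup_{\operatorname{card}(S)\le\a}\be S$ for the set on the right-hand side of the statement (which is open in $\be\L$, being a union of the clopen sets $\be S$).

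The key tool is the behaviour of characteristic functions under the extension $f\mapsto\hat f$ from $C_b(\L)$ to $C(\be\L)$. For any $S\subset\L$ the function $\chi_S$ takes only the values $0$ and $1$, so its extension $\hat{\chi_S}$ takes only these values as well and hence is the characteristic function of the clopen set $\hat{\chi_S}^{-1}(1)$; since $\L$ is dense and this set is clopen, it coincides with $\operatorname{cl}_{\be\L}S=\be S$. Thus $\hat{\chi_S}=\chi_{\be S}$ and each $\be S$ is clopen. When $\operatorname{card}(S)\le\a$ we moreover have $\chi_S\in C_{b,\a}(\L)$.

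With this in hand I would prove the two inclusions. For $U\subseteq\be\L\setminus F$: if $p\in\be S$ with $\operatorname{card}(S)\le\a$, then $\chi_S\in C_{b,\a}(\L)$ and $\hat{\chi_S}(p)=\chi_{\be S}(p)=1\ne0$, so $p$ does not lie in the common zero set $F$. For $\be\L\setminus F\subseteq U$: if $p\notin F$, choose $f\in C_{b,\a}(\L)$ with $\hat f(p)\ne0$ and put $S=\operatorname{supp}f$, so that $\operatorname{card}(S)\le\a$; as functions on $\L$ we have $f=\chi_S f$, hence $\hat f=\chi_{\be S}\hat f$, and evaluating at $p$ forces $\chi_{\be S}(p)=1$, i.e. $p\in\be S\subseteq U$. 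Combining the inclusions gives $\be\L\setminus F=U$, and since $K_{\L,\a}=\be\L\setminus F$ this is exactly the claim.

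The routine verifications I would relegate to remarks are that $C_{b,\a}(\L)$ really is a closed ideal (closure under the algebra operations and under uniform limits both use that $\a$ is infinite, so a countable union of supports of cardinality $\le\a$ again has cardinality $\le\a$) and the cited general correspondence between closed ideals and closed sets. The one genuinely delicate point, and the step I expect to require the most care, is the identification $\hat{\chi_S}=\chi_{\be S}$ together with the clopenness of $\be S$: this rests on the total disconnectedness of $\be\L$ for discrete $\L$ and on the compatibility of $\be S$ (as a subspace of $\be\L$) with the closure operation, which is precisely the content of the fact $\be S\subset\be\L$ recalled before the statement.
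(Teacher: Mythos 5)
Your proof is correct and follows essentially the same route as the paper: both identify $K_{\L,\a}$ with the points of $\be\L$ at which the ideal $C_{b,\a}(\L)$ does not vanish, use $\chi_S$ with $\hat{\chi_S}=\chi_{\be S}$ for one inclusion, and use $p\in\be(\operatorname{supp}f)$ whenever $\hat f(p)\ne0$ for the other. Your write-up is in fact slightly more careful than the paper's, which asserts $p\notin\be(\operatorname{supp}f)\Rightarrow\hat f(p)=0$ without the justification $\hat f=\chi_{\be(\operatorname{supp}f)}\hat f$ that you supply.
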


\begin{proof}
We need to prove that $K_{\L,\a}$ is exactly the space of characters on $C_{b}(\L)$ (i.e. elements of $\be\L$) that are not identically zero on $C_{b,\a}(\L)$.

If $p\in\be S$ for some $S\subset \L$, $\mathrm{\card}(S)\le\a$, then take the indicator function $\chi_S$ of $S$, obviously $\chi_S\in C_{b,\a}(\L)$. Since $S$ is dense in $\be S$, $\widehat{\chi_S}$ is identically equal to 1 on $\be S$ and, therefore, $p(\chi_S)=\widehat{\chi_S}(p)=1$, so $p$ is not identically zero on $C_{b,\a}(\L)$.

If $p\notin\be S$ for any $S\subset \L$ with $\card(S)\le\a$, then for any $f\in C_{b,\a}(\L)$ we have $\card(\supp(f))\le\a$, so $p\notin\be (\supp(f))$, so $p(f)=\widehat{f}(p)=0$, i.e. $p$ is zero identically on $C_{b,\a}(\L)$.
\end{proof}

Note that even the smallest $K_{\L,\aleph_0}$ (where $\L$ has the smallest uncountable cardinality and $\aleph_0$ is countable) is not homeomorphic to $[0,\om_1)$, because the cardinality of $[0,\om_1)$ is the first uncountable cardinal, while the cardinality of $K_{\L,\aleph_0}$ is not less than the cardinality of $\be\N$, which is $2^{\mathfrak c}$
(see \cite[3.2]{Walker}). 

From these commutative examples one can easily construct noncommutative examples by considering algebras of functions on corresponding topological spaces that take values in matrix algebras (or in algebras of operators on some Hilbert space).
Another noncommutative example is described by the following theorem.

\begin{teo}
Let $H$ be a non-separable Hilbert space and $T\in B(H)$ be a bounded operator. Then the following statements are equivalent:
\begin{enumerate}
\item[1)] $H_1:=\operatorname{ker}(T)^{\bot}$ is separable.
\item[2)] $H_2:=\operatorname{ran}(T)=T(H)$ is separable.
\end{enumerate}
Therefore, every operator satisfying these conditions has a local unit and the space $\A$ of all such operators forms a non-unital locally unital $C^*$-algebra.
\end{teo}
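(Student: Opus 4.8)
The plan is to handle the equivalence $1)\Leftrightarrow 2)$ and the ``Therefore'' assertion separately. For the equivalence I would exploit the polar decomposition $T=U|T|$, where $U$ is the partial isometry with initial space $\overline{\operatorname{Im}|T|}=\overline{\operatorname{Im}(T^*)}=\ker(T)^\perp=H_1$ and final space $\overline{\operatorname{Im}(T)}$. Since $U$ restricts to an isometric isomorphism of $H_1$ onto $\overline{\operatorname{Im}(T)}=\overline{H_2}$, the spaces $H_1$ and $\overline{H_2}$ are isometrically isomorphic, so one is separable exactly when the other is. It then remains to bridge $H_2$ and $\overline{H_2}$ by the standard fact that a linear subspace of a Hilbert space is separable if and only if its closure is separable (a countable dense subset of $H_2$ stays dense in $\overline{H_2}$, and conversely every subspace of a separable metric space is separable), which closes the loop.

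For the second part I would first record the clean description $\A=\{T\in B(H):\overline{\operatorname{Im}(T)}\text{ is separable}\}$, which by the equivalence coincides with $\{T:\ker(T)^\perp\text{ is separable}\}$ and is therefore symmetric under taking adjoints. To produce a local unit for a given $T\in\A$, set $M:=\overline{H_1+\overline{H_2}}$; this is a separable closed subspace because $H_1$ and $\overline{H_2}$ are both separable, and I would take $g:=Q$, the orthogonal projection onto $M$, which is self-adjoint automatically. The identity $QT=T$ holds because $Q$ fixes $\operatorname{Im}(T)\subseteq\overline{H_2}\subseteq M$, while $TQ=T$ holds because $M^\perp\subseteq H_1^\perp=\ker(T)$, so that $T(Q-I)=0$. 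Finally $Q\in\A$ since $\operatorname{Im}(Q)=M$ is separable, so $Q$ is a genuine local unit inside the algebra.

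It remains to check that $\A$ is a non-unital $C^*$-algebra, which I would do by showing it is a norm-closed two-sided ideal of $B(H)$. Closure under sums follows from $\operatorname{Im}(S+T)\subseteq\overline{\operatorname{Im}(S)}+\overline{\operatorname{Im}(T)}$ together with the fact that the closed sum of two separable subspaces is separable; closure under the adjoint is exactly the equivalence $1)\Leftrightarrow 2)$; and the ideal property follows since $\operatorname{Im}(ST)\subseteq\operatorname{Im}(S)$ while $\operatorname{Im}(ST)$ is the continuous image under $S$ of the separable set $\operatorname{Im}(T)$. For norm-closedness, if $T_n\to T$ with each $\overline{\operatorname{Im}(T_n)}$ separable, then $M:=\overline{\sum_n\overline{\operatorname{Im}(T_n)}}$ is separable and contains every $T_nh$, hence contains $Th=\lim_n T_nh$, so $\overline{\operatorname{Im}(T)}$ is separable. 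Non-unitality follows because $I\notin\A$ (as $H$ is non-separable): if $\A$ had a unit $e$, then $eP_v=P_v$ for every rank-one projection $P_v$ onto a unit vector $v$, forcing $ev=v$ for all $v$ and hence $e=I\notin\A$, a contradiction.

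The main obstacle I anticipate is the direction $2)\Rightarrow 1)$: the natural candidate map, the restriction $T|_{H_1}\colon H_1\to H_2$, is a continuous bijection, but its inverse need not be bounded when $T$ fails to have closed range, so separability cannot simply be transported back along it. This is precisely why I route the argument through the polar decomposition, which supplies an honest isometry between $H_1$ and $\overline{H_2}$ and sidesteps the unboundedness issue; once the range/kernel bookkeeping is in place, the remaining verifications (the ideal and closedness properties and the local-unit computation) are routine.
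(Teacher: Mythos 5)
Your proof is correct; the place where it genuinely diverges from the paper is the equivalence itself, above all the direction 2) $\Rightarrow$ 1). The paper never invokes the polar decomposition: it notes that $\ker(T)^{\bot}=\overline{\operatorname{Im}(T^*)}$ and that $\operatorname{Im}(T^*)=T^*\bigl(\ker(T^*)^{\bot}\bigr)=T^*\bigl(\overline{\operatorname{Im}(T)}\bigr)$, so separability travels from $\operatorname{Im}(T)$ to its closure, then through the continuous map $T^*$, and finally to the closure $\ker(T)^{\bot}$; the direction 1) $\Rightarrow$ 2) is just $T(H)=T(H_1)$. That argument is more elementary than yours, using only kernel--range duality plus the stability of separability under continuous images and closures. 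Your route through $T=U|T|$ costs the polar decomposition but buys a stronger conclusion: the partial isometry identifies $H_1$ with $\overline{H_2}$ isometrically, so the two spaces have the same orthonormal-basis cardinality, which yields at once the generalization to the algebras $B_\a(H)$ described in the Remark following the theorem --- a point the paper has to justify separately (by observing that the cardinality of an orthonormal basis does not increase under bounded images or closures). For the final assertion your construction agrees with the paper's: the local unit is the orthogonal projection onto $\overline{H_1+\overline{H_2}}$ (the paper writes $\overline{H_1\oplus H_2}$, a mild abuse since $H_1$ and $H_2$ need not be orthogonal). Where the paper simply declares the facts that $\A$ is a $C^*$-algebra and is not unital to be ``obvious'', you actually verify them --- closure under sums, the two-sided ideal property, norm-closedness, and the rank-one-projection argument for non-unitality --- and all of these verifications are sound, filling in details the paper omits.
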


\begin{proof}
Implication 1) $\Rightarrow$ 2) is obvious since $T(H)=T(\ker(T)\oplus\ker(T)^{\bot})= T(H_1)$.

2) $\Rightarrow$ 1). We can use the fact that $\operatorname{ker}(T)^{\bot}=\overline{\operatorname{ran}(T^*)}$ (because $\operatorname{ker}(T)=\operatorname{ran}(T^*)^{\bot}$, see \cite[Theorem 12.10]{RudFun}). But $\operatorname{ran}(T^*)\subset T^*((\ker(T^*))^{\bot})$ and $(\ker(T^*))^{\bot}=\overline{\operatorname{ran}(T)}$, the last is separable, so $\operatorname{ran}(T^*)$ is separable and, hence, $\operatorname{ker}(T)^{\bot}=\overline{\operatorname{ran}(T^*)}$ is separable.
From $\operatorname{ker}(T)^{\bot}=\overline{\operatorname{ran}(T^*)}$ we also conclude that $T^*$ satisfies these conditions too.

The fact that $\A$ is a $C^*$-algebra and that it is not unital is obvious. An example of a local unit for $T\in\A$ is given by an orthogonal projection on the subspace $\overline{H_1+H_2}$.
\end{proof}

\begin{rk}
Note that in this example we also can take a construction with arbitrary cardinals: if the cardinality of an orthonormal basis in $H$ is an uncountable cardinal $\g$, we can for any infinite cardinal $\a<\g$ consider a $C^*$-algebra $B_\a(H)$ of operators such that $\operatorname{ran}(T)=T(H)$ is a space in which the cardinality of an orthonormal basis is no greater than $\a$.
The analogue
of the previous theorem is still valid since the cardinality of an orthonormal basis of a subspace does not increase
when applying a bounded operator, taking the closure of the subspace, or taking the closure of the union of at most countable family of such subspaces and since $\overline{\operatorname{ran}(T^*)}=\operatorname{ker}(T)^{\bot}$.
\end{rk}

The following theorem is a $C^*$-algebraic interpretation of the obvious topological fact that if every $\s$-compact subset of a locally compact space $K$ is precompact, then there is no $\s$-compact subset which is dense in $K$.

\begin{teo}
Suppose that a $C^*$-algebra $\A$ is not unital, but locally unital. Then $\A$ has no thick element and no strictly positive element (as a consequence, it is not $\s$-unital).
\end{teo}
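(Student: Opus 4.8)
The plan is to derive this statement almost immediately from Corollary~\ref{luneth}, which is already available: in a non-unital $C^*$-algebra, no element possessing a local unit can be thick or strictly positive. Since local unitality is precisely the hypothesis that \emph{every} element has a local unit, the two properties are directly incompatible.

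Concretely, I would argue by contradiction for each of the two claims. Suppose $\A$ is non-unital and locally unital, and suppose toward a contradiction that $\A$ contains a thick element $g$. By local unitality, $g$ admits a local unit $h\in\A$. But Corollary~\ref{luneth} asserts that in a non-unital $C^*$-algebra an element with a local unit cannot be thick, contradicting the thickness of $g$. Hence $\A$ has no thick element. The identical argument, applied to a hypothetical strictly positive element (which Corollary~\ref{luneth} also excludes from having a local unit), shows that $\A$ has no strictly positive element.

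For the parenthetical consequence, I would invoke the standard characterization that a $C^*$-algebra is $\s$-unital if and only if it contains a strictly positive element (the strictly positive element playing the role of a countable-type approximate identity). Having just ruled out strictly positive elements, it follows at once that $\A$ is not $\s$-unital.

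The statement carries essentially no technical obstacle, since the substantive computation---that a local unit for a thick or strictly positive element would force the existence of a global unit---has already been performed in the lemma preceding Corollary~\ref{luneth}. The only point requiring a small amount of care is the final clause: one must cite the equivalence between $\s$-unitality and the existence of a strictly positive element rather than attempt to prove it here.
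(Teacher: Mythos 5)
Your proposal is correct and coincides with the paper's own proof, which likewise deduces the theorem in one line from Corollary~\ref{luneth} applied to each element (now having a local unit by hypothesis). The remark about citing the equivalence of $\s$-unitality with the existence of a strictly positive element is also consistent with how the paper treats that clause.
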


\begin{proof}
Indeed, since the algebra is not unital and every element has a local unit, by Corollary \ref{luneth} no element can be thick or strictly positive.
\end{proof}

\begin{cor}
Suppose that a $C^*$-algebra $\A$ is not unital, but locally unital. Then as a module over itself $\A$ is not countably generated and has no at most countable frame.
\end{cor}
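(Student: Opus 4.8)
The plan is to deduce the statement from the preceding Theorem, which gives that $\A$ is not $\s$-unital, via the classical correspondence between $\s$-unitality and countable generation of $\A$ as a module over itself.

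First I would record the only direction of that correspondence I actually need: if $\A$ is countably generated over itself by a sequence $\{a_j\}$, then $\A$ is $\s$-unital. Put $b=\sum_{j}2^{-j}\frac{a_ja_j^*}{1+\|a_j\|^2}$; this series converges in norm, so $b\in\A$ and $b\ge0$. For each $j$ one has $0\le c_j\,a_ja_j^*\le b$ with $c_j>0$, hence $\overline{a_j\A}=\overline{a_ja_j^*\A}\subseteq\overline{b\A}$, using the standard fact that $0\le x\le y$ implies $\overline{x\A}\subseteq\overline{y\A}$. Since the $a_j$ generate, $\A=\overline{\sum_j a_j\A}\subseteq\overline{b\A}$, so $\overline{b\A}=\A$ and $b$ is strictly positive. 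As $\A$ is not $\s$-unital, it is therefore not countably generated over itself.

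Next I would show that an at most countable frame $\{x_j\}$ in $\A$ is a countable generating set, which by the previous paragraph is impossible. For a standard frame this is immediate: the frame operator $S$ is positive, adjointable and invertible (one has $c_1 I\le S\le c_2 I$), so $x=\sum_j (S^{-1}x_j)\langle x_j,x\rangle$ with $S^{-1}x_j\in\A$ and the series converging in norm, whence $\A=\overline{\sum_j x_j\A}$. Combining this with the first step produces a strictly positive element and hence contradicts the preceding Theorem, so no at most countable standard frame exists.

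The main obstacle is the non-standard case: for a merely ultraweakly convergent countable frame the reconstruction formula lives in the enveloping von Neumann algebra and need not converge in norm, so I cannot read off a norm-generating set directly. Here I would argue that the closed right ideal $\M=\overline{\sum_j x_j\A}$ is already all of $\A$: if not, the hereditary subalgebra $\M\cap\M^*$ is proper, so there is a state $\f$ vanishing on it, and in particular $\f(x_jx_j^*)=0$ for all $j$; I would then try to contradict the lower bound of Lemma~\ref{fr_cr}, namely $c_1\f(\langle x,x\rangle)\le\sum_j\f(\langle x,x_j\rangle\langle x_j,x\rangle)$, by choosing $x$ adapted to $\f$ through its GNS representation. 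The delicate point is that each summand can be driven to $0$ while the infinite sum retains mass escaping to large $j$; controlling this tail is exactly where the argument must be made careful, and it is the step I expect to be the crux.
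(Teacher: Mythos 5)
Your first half is correct and, unlike the paper, self-contained: the paper disposes of countable generation by citing \cite[Proposition 2.3]{Asadi2016} (countably generated over itself $\Leftrightarrow$ $\s$-unital) together with the preceding theorem, whereas you prove the implication you actually need (countable generating set $\Rightarrow$ strictly positive element) directly, and correctly. The standard-frame case of your second half is also fine, and your ideal-theoretic reduction in the non-standard case is aimed at the right target: a proper closed right ideal $\overline{\sum_j x_j\A}$ does yield a state $\f$ with $\f(x_jx_j^*)=0$ for all $j$, and $x_jx_j^*$ (not $x_j^*x_j$) is indeed the quantity occurring in the frame terms $\<x,x_j\>\<x_j,x\>=x^*x_jx_j^*x$. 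But the statement excludes \emph{all} at most countable frames, including non-standard ones, and there your argument stops exactly at the decisive step, as you yourself admit: from $\f(x_jx_j^*)=0$ for every $j$ you must contradict the lower frame bound, and the ``tail control'' you flag as the crux is a genuine missing ingredient, not a routine verification. Pointwise vanishing of $\f$ on each $x_jx_j^*$ says nothing, by itself, about the sums $\sum_j\f(x^*x_jx_j^*x)$, since these terms are not dominated by the numbers $\f(x_jx_j^*)$.

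The missing idea is to test the frame inequality of Lemma \ref{fr_cr} on elements $e_\l$ of an approximate identity, and to make the passage from $\f(x_jx_j^*)=0$ to smallness of $\sum_j\f(e_\l x_jx_j^*e_\l)$ uniform over finite partial sums. Two ingredients accomplish this: (i) the net of finite partial sums $s_F=\sum_{j\in F}x_jx_j^*$, $F\subset J$ finite, is bounded in norm --- for countable frames this is \cite[Lemma 2.3]{Fuf2023Th}, and the first theorem of \S4 of the paper extends it to arbitrary frames; and (ii) Lemma \ref{uniform}: $\f(a-e_\l ae_\l)\to0$ uniformly on norm-bounded sets. Given these, choose $\l$ so large that $|\f(s_F)-\f(e_\l s_Fe_\l)|<c_1/4$ simultaneously for all finite $F$ and also $\f(e_\l^2)\ge1/2$. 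Since $\f(s_F)=0$, every $\f(e_\l s_Fe_\l)$ is below $c_1/4$, hence $\sum_j\f(\<e_\l,x_j\>\<x_j,e_\l\>)=\sup_F\f(e_\l s_Fe_\l)\le c_1/4<c_1/2\le c_1\f(\<e_\l,e_\l\>)$, contradicting the lower frame bound. This is precisely the proof of Theorem \ref{nonempty} in the paper, and it is the content of \cite[Theorem 2.4]{Fuf2023Th}, which is the citation on which the paper's own one-line proof of this corollary rests. Without (i) and (ii), or some substitute for them, your outline cannot be closed.
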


\begin{proof}
Indeed, $C^*$-algebra $\A$ as a module over itself is countably generated iff it is $\s$-unital (\cite[Proposition 2.3]{Asadi2016}). Non-existence of countable frame follows from \cite[Theorem 2.4]{Fuf2023Th}.
\end{proof}

\begin{lem}
Let $\A$ be a $C^*$-algebra. For any sequence $\{\f_m\}_{m\in\N}$ of states there exists an element $g\in\A$ that is their common support.
\end{lem}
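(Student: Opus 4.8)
The plan is to glue the individual supports into a single element by a norm-convergent weighted sum, in the same spirit as part 3) of the Proposition on local units, and then to recover each factor from positivity. By \cite[Theorem 2.2]{Fuf2023Th}, for every $m\in\N$ there exists a support $g_m\in\A$ of $\f_m$ with $g_m\ge0$, i.e. $g_m x=0$ implies $\f_m(x)=0$ for all $x\in\A$. I would set
$$
g=\sum_{m=1}^\infty \frac{1}{2^m}\,\frac{g_m}{1+\|g_m\|}.
$$
Since each summand has norm at most $2^{-m}$, the series converges absolutely in norm, so $g\in\A$; being a norm limit of finite sums of positive elements, it is positive, $g\ge0$. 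Thus $g$ is a legitimate candidate for a common support.

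It then remains to verify that $gx=0$ forces $\f_m(x)=0$ for every $m$. Assuming $gx=0$ for some $x\in\A$, I would multiply on the left by $x^*$ to obtain $x^*gx=0$, and use continuity of multiplication to expand this as a norm-convergent series $\sum_m \frac{1}{2^m}\frac{x^*g_m x}{1+\|g_m\|}=0$ whose summands are all positive, because $g_m\ge0$ gives $x^*g_m x\ge0$. From $x^*g_m x=0$ one gets $(\sqrt{g_m}\,x)^*(\sqrt{g_m}\,x)=0$, hence $\sqrt{g_m}\,x=0$ and therefore $g_m x=\sqrt{g_m}\,\sqrt{g_m}\,x=0$; since $g_m$ is a support of $\f_m$, this yields $\f_m(x)=0$. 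The companion implication $xg=0\Rightarrow\f_m(x)=0$ follows by applying the same argument to $x^*$ (conjugation), exactly as in the definition of support.

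The step that carries the real content is the passage from $x^*gx=0$ to the vanishing of each term $x^*g_m x$: I would justify that a norm-convergent sum of positive elements equal to $0$ must have every summand equal to $0$. For this I apply an arbitrary state $\psi$, obtaining $\sum_m \psi\!\left(\tfrac{1}{2^m}\tfrac{x^*g_m x}{1+\|g_m\|}\right)=0$ with nonnegative terms, so each $\psi(x^*g_m x)=0$; since $\|a\|=\sup_\psi \psi(a)$ for positive $a$, this gives $x^*g_m x=0$ for every $m$. This positivity argument, rather than the construction of $g$ itself, is the main (and only) obstacle, and it is precisely where the choice of strictly positive weights $\frac{1}{2^m(1+\|g_m\|)}$ is used to ensure that no cancellation between the $g_m$ can occur.
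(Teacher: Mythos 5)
Your proposal is correct and follows essentially the same route as the paper: the same weighted norm-convergent sum $g=\sum_m 2^{-m}g_m/(1+\|g_m\|)$, the same sandwiching by $x^*$ to get a vanishing series of positive terms, and the same square-root trick to recover $g_m x=0$. The only difference is that you spell out (via states) why each term of a vanishing positive series must vanish, a step the paper simply asserts.
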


\begin{proof}
For any $m\in\N$ there exists a support $g_m\in\A$, $g_m\ge0$, for $\f_m$. Take $g=\sum\limits_{m\in\N}\frac{1}{2^m}\frac{g_m}{1+||g_m||}\in\A$. Suppose that $xg=0$. Then $xgx^*=0$, i.e. $\sum\limits_{m\in\N}\frac{1}{2^m}\frac{xg_mx^*}{1+||g_m||}=0$. Since it is the series of positive elements, we have that $xg_mx^*=0$ for any $m\in\N$. Hence, $x\sqrt{g_m}(x\sqrt{g_m})^*=0$, so $||x\sqrt{g_m}(x\sqrt{g_m})^*||=||x\sqrt{g_m}||^2=0$ and $x\sqrt{g_m}=0$, and $x\sqrt{g_m}\cdot\sqrt{g_m}=xg_m=0$. So, since $g_m$ is a support for $\f_m$, $\f_m(x)=0$ for any $m\in\N$, i.e. $g$ is a support for all $\f_m$, $m\in\N$.
\end{proof}

\begin{lem}\label{fahg}
Suppose that a $C^*$-algebra $\A$ is not unital, but locally unital. For any sequence $\{\f_m\}_{m\in\N}$ of states there exists an element $h\in\A$, $0\le h\le1$, such that $\f_m(a)=\f_m(ha)=\f_m(ah)$ for any $a\in\A$ and any $m\in\N$.
In addition, in the unitalization $\dot{\A}$ we also have $\f_m(a(1-h))=\f_m((1-h)a)=0$.
\end{lem}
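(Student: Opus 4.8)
The plan is to combine the previous two lemmas. The just-proved lemma gives a common support $g \in \A$, $g \ge 0$, for the whole sequence $\{\f_m\}_{m\in\N}$. Since $\A$ is locally unital, this $g$ has a local unit $h \in \A$, which by the first Proposition of Section 2 we may take to satisfy $0 \le h \le 1$. I claim this $h$ works simultaneously for all $m$.

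Let me check the structure. Lemma \ref{fah} already handles a single state: if $g$ is a support of $\f$ and $h$ is a local unit for $g$, then $\f(a) = \f(ah) = \f(ha) = \f(hah)$ and, in $\dot{\A}$, $\f(a(1-h)) = \f((1-h)a) = 0$. So I just need to verify the hypotheses of that lemma hold for every $\f_m$ with the single element $h$ I have produced.

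Let me outline the steps I'd write out.

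\begin{proof}
By the previous lemma, there exists $g \in \A$, $g \ge 0$, which is a common support for all $\f_m$, $m \in \N$, i.e.\ $xg = 0$ (equivalently $gx = 0$) implies $\f_m(x) = 0$ for every $m \in \N$. Since $\A$ is locally unital, the element $g$ has a local unit, and by the first Proposition of \S2 we may choose a local unit $h \in \A$ for $g$ with $0 \le h \le 1$.

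Now $h$ is a local unit for the single element $g$, and $g$ is a support of each individual state $\f_m$. Hence for each fixed $m$ the hypotheses of Lemma \ref{fah} are met (with $\f = \f_m$, the same $g$ and the same $h$), so that lemma yields
$$
\f_m(a) = \f_m(ha) = \f_m(ah), \qquad \f_m(a(1-h)) = \f_m((1-h)a) = 0
$$
in $\dot{\A}$, for every $a \in \A$. Since $m \in \N$ was arbitrary, these identities hold for all $m$ simultaneously with the single element $h$.
\end{proof}

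The only genuine point to get right is that the same $g$ serves as a support for \emph{every} $\f_m$, so that one local unit $h$ of that common $g$ feeds into Lemma \ref{fah} uniformly in $m$; that is exactly what the preceding lemma supplies. There is no real obstacle here — the work was front-loaded into building the common support and into the single-state Lemma \ref{fah}. I would only double-check that the choice $0 \le h \le 1$ is legitimately available (it is, by the cited Proposition) so that the stated bound on $h$ is honored.
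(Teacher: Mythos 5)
Your proof is correct and follows essentially the same route as the paper's: take the common support $g$ from the preceding lemma, use local unitality to get a local unit $h$ for $g$, and apply Lemma \ref{fah} to each $\f_m$ separately. Your explicit appeal to the Proposition of \S2 to normalize $0\le h\le 1$ is a small point the paper leaves implicit, but otherwise the two arguments coincide.
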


\begin{proof}
By the previous lemma there exists $g\in\A$ that is a support for any $\f_m$, $m\in\N$. Let $h\in\A$ be a local unit for $g$. By Lemma \ref{fah} we have that $\f_m(a)=\f_m(ha)=\f_m(ah)$, i.e. the required equalities hold.
\end{proof}

\section{Additional properties of frames}

The following theorem is a generalization of \cite[Lemma 2.3]{Fuf2023Th}, where only at most countable frames were considered.

\begin{teo}
Let $\A$ be a $C^*$-algebra and $\{x_j\}_{j\in J}$ be a frame in $\A$. Then the net of finite partial sums of $\sum\limits_{J}x_jx_j^*$ (i.e. the net $\{\sum\limits_{j\in J_\theta}x_jx_j^*\}_{\theta\in\Theta}$, where $J_\theta\subset J$ is finite, $\Theta$ is a set of all finite subsets of $J$) is bounded in norm.  
\end{teo}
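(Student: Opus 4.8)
The plan is to bound every finite partial sum $S_\theta:=\sum_{j\in J_\theta}x_j^*x_j$ by the upper frame constant $c_2$, uniformly in $\theta$. Each $S_\theta$ is a positive element of $\A$, so its norm is recovered by testing against states: $\|S_\theta\|=\sup_{\f}\f(S_\theta)$, the supremum running over all states $\f$ on $\A$. It therefore suffices to prove the scalar estimate $\f(S_\theta)\le c_2$ for every state $\f$ and every finite $J_\theta\subset J$; the norm bound, and hence boundedness of the entire net, follows at once. Note that at no stage will I need $J$ to be countable, which is exactly how the present statement improves on the at most countable version.

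Fix such an $\f$ and $J_\theta$, and let $\{e_\l\}_\L$ be an approximate identity with $0\le e_\l\le1$. Since $\A$ need not be unital, the right object to probe with is $e_\l$ rather than a unit. Feeding $x=e_\l$ into the frame criterion (Lemma \ref{fr_cr}) yields
$$\sum_j\f\big(e_\l\,x_j^*x_j\,e_\l\big)\le c_2\,\f(\<e_\l,e_\l\>)=c_2\,\f(e_\l^2)\le c_2,$$
the last inequality holding because $0\le e_\l^2\le1$. Every summand $\f(e_\l x_j^*x_j e_\l)$ is non-negative, so keeping only the finitely many indices in $J_\theta$ can only decrease the left-hand side; hence $\sum_{j\in J_\theta}\f(e_\l x_j^*x_j e_\l)\le c_2$ for every $\l\in\L$.

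Now let $\l$ increase along $\L$. By Lemma \ref{uniform}, in the form $\f(a-e_\l a e_\l)\to0$ applied to $a=x_j^*x_j$, we have $\f(e_\l x_j^*x_j e_\l)\to\f(x_j^*x_j)$ for each $j$; as $J_\theta$ is finite the limit passes through the sum, so
$$\f(S_\theta)=\sum_{j\in J_\theta}\f(x_j^*x_j)=\lim_{\l}\sum_{j\in J_\theta}\f(e_\l x_j^*x_j e_\l)\le c_2.$$
Taking the supremum over all states gives $\|S_\theta\|\le c_2$, and since $\theta\in\Theta$ was arbitrary the net $\{S_\theta\}_{\theta\in\Theta}$ is bounded in norm by $c_2$.

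The step that genuinely carries the argument — and the one I expect to require the most care — is the passage from the full (possibly uncountable, only ultraweakly convergent) frame series to a single finite partial sum, valid for all states simultaneously. This is precisely what the non-negativity of the truncated summands buys: no tail estimate and no countability of $J$ are needed, and this is what removes the countability hypothesis of the earlier, at most countable version. The only other delicate point, interchanging the limit in $\l$ with the finite summation, is supplied cleanly by Lemma \ref{uniform}; everything else reduces to the standard fact that the norm of a positive element equals its supremum over states.
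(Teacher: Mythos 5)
Your overall strategy is essentially the paper's, run directly instead of contrapositively: both proofs test the frame inequality of Lemma \ref{fr_cr} on an approximate identity, use positivity of the summands to truncate to a finite partial sum, and then let $\l$ grow. The paper argues by contradiction with explicit constants (a finite set with $\|\cdot\|\ge 4c_2+6$, a state taking value $\ge 2c_2+3$ on it, etc.), whereas your use of $\|S_\theta\|=\sup_\f\f(S_\theta)$ for positive $S_\theta$ is cleaner and would give the sharp uniform bound $c_2$. So the architecture is fine; the problem is in the one computation that carries it.

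The genuine error is the placement of the adjoints when you apply Lemma \ref{fr_cr}. With the paper's convention $\<a,b\>=a^*b$ for $\A$ as a module over itself, one has $\<e_\l,x_j\>\<x_j,e_\l\>=e_\l x_j x_j^* e_\l$, \emph{not} $e_\l x_j^* x_j e_\l$; so the criterion at $x=e_\l$ yields $\sum_j\f(e_\l x_jx_j^*e_\l)\le c_2\f(e_\l^2)$, and your displayed inequality with $x_j^*x_j$ inside the sandwich is not what the lemma provides. This is not a cosmetic slip, because the assertion for $\sum_j x_j^*x_j$ is false in general: in $\A=K(H)$ take the matrix units $x_j=e_{j1}$, $j\in\N$; then $\sum_j\<x,x_j\>\<x_j,x\>=\sum_j x^*e_{jj}x$ converges in norm to $x^*x$ for every compact $x$, so $\{x_j\}$ is a normalized tight (even standard) frame, yet $x_j^*x_j=e_{11}$ for every $j$, and the finite partial sums of $\sum_j x_j^*x_j$ have norm $\operatorname{card}(J_\theta)$, which is unbounded. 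The $x_j^*x_j$ in the theorem's statement is evidently a typo in the paper: its own proof works with $x_jx_j^*$ throughout (it chooses $J_\theta$ with $\|\sum_{j\in J_\theta}x_jx_j^*\|\ge 4c_2+6$ and identifies $\psi(\<e_\l,x_j\>\<x_j,e_\l\>)=\psi(e_\l x_jx_j^*e_\l)$), and you have inherited the typo rather than the proof. Once you replace $x_j^*x_j$ by $x_jx_j^*$ everywhere, each of your steps is correct: $e_\l x_jx_j^*e_\l=(x_j^*e_\l)^*(x_j^*e_\l)\ge0$ justifies the truncation uniformly in $\l$, the limit $\f(e_\l a e_\l)\to\f(a)$ for each fixed $a=x_jx_j^*$ holds even in norm ($\|e_\l ae_\l-a\|\to0$, so Lemma \ref{uniform} is more than is needed for a finite sum), and taking the supremum over states turns $\f(S_\theta)\le c_2$ into $\|S_\theta\|\le c_2$.
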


\begin{proof}
Suppose that these sums are not bounded. Then there is a finite set $J_\theta\subset J$ such that $||\sum\limits_{j\in J_\theta}x_jx_j^*||\ge4c_2+6$, where $c_2$ is the upper frame constant. Then by \cite[Lemma 1.2]{Troitsky2020JMAA} there exists a state $\psi$ such that $|\psi(\sum\limits_{j\in J_\theta}x_jx_j^*)|\ge2c_2+3$.
Let $\{e_\l\}_{\l\in\L}$ be an arbitrary approximate unit in $\A$.
Since $\psi(e_\l (\sum\limits_{j\in J_\theta}x_jx_j^*)e_\l)\xrightarrow[\l\in \L]{}\psi(\sum\limits_{j\in J_\theta}x_jx_j^*)$ there exists an element $e_{\l_1}$ of the approximate unit such that for any $\l>\l_1$ we have 
$|\psi(e_\l (\sum\limits_{j\in J_\theta}x_jx_j^*)e_\l)|\ge2c_2+2$. 

Note that $e_\l (\sum\limits_{j\in J_\theta}x_jx_j^*)e_\l$ is a positive element of $\A$, so
 $|\psi(e_\l (\sum\limits_{j\in J_\theta}x_jx_j^*)e_\l)|=\psi(e_\l (\sum\limits_{j\in J_\theta}x_jx_j^*)e_\l)$.
Hence, $\sum\limits_{j\in J_\theta}\psi(\<e_\l ,x_j\>\<x_j,e_\l\>)=\sum\limits_{j\in J_\theta}\psi(e_\l x_jx_j^*e_\l)\ge2c_2+2$.

But $\psi(e_\l^2)\xrightarrow[\l\in \L]{}1$ (\cite[2.3.11]{BratRob}), so there exists an element $e_{\l_2}$ such that for any $\l>\l_2$ we have $\psi(\<e_\l,e_\l\>)=\psi(e_\l^2)<2$, hence 
$(c_2+1)\psi(\<e_\l,e_\l\>)\le2c_2+2$.

Take $\l$ such that $\l>\l_1$ and $\l>\l_2$. Then we have
$$
\sum\limits_{j\in J_\theta}\psi(\<e_\l ,x_j\>\<x_j,e_\l\>)\ge2c_2+2\ge(c_2+1)\psi(\<e_\l,e_\l\>).
$$

This contradicts the frame inequality: for any state $\psi$ and any element $x$ (in particular, for $x=e_\l$), we have

$$
\sum\limits_{j\in  J}\psi(\<e_\l ,x_j\>\<x_j,e_\l\>)\le c_2\psi(\<e_\l,e_\l\>),
$$
where the sum is taken over all non-zero elements, which in fact form at most countable set (due to \cite[Theorem 2.5]{Fuf2023Th}) which includes $J_\theta$.
Hence the net of sums is bounded.
\end{proof}

The following theorem is a generalization of the obvious topological fact that a frame must separate points.

\begin{teo}\label{nonempty}
Let $\A$ be a $C^*$-algebra and $\{x_j\}_{j\in J}$ be a frame in $\A$. Then 
for any state $\psi$ on $\A$ there exists a frame element $x_j$ such that $\psi(x_jx_j^*)\ne0$.
\end{teo}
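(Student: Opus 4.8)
The plan is to argue by contradiction, in the spirit of the preceding theorem. Assume that $\psi(x_j^*x_j)=0$ for every $j\in J$, and derive a contradiction with the lower frame bound of Lemma~\ref{fr_cr}. Fix an approximate identity $\{e_\l\}_\L$; each $e_\l$ is positive and self-adjoint, so $\<e_\l,e_\l\>=e_\l^2$, and $\psi(e_\l^2)\xrightarrow[\l\in\L]{}1$ by \cite[2.3.11]{BratRob}. The idea is that, just as in the commutative case a frame must not vanish ``at the point $\psi$'', the lower bound applied to the approximate identity should detect at least one frame element on which $\psi$ does not vanish.

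First I would extract the consequences of the hypothesis. By the Cauchy--Schwarz inequality for states (\cite[2.3.10]{BratRob}), $|\psi(x_j^*a)|^2\le\psi(x_j^*x_j)\,\psi(a^*a)=0$ for every $a\in\A$, so $\psi(x_j^*a)=0$ and, after conjugation, $\psi(ax_j)=0$ for all $a\in\A$ and all $j$. Next I would apply the lower frame inequality of Lemma~\ref{fr_cr} to the test element $x=e_\l$:
$$
c_1\,\psi(\<e_\l,e_\l\>)\le\sum_{j}\psi(\<e_\l,x_j\>\<x_j,e_\l\>).
$$
The left-hand side tends to $c_1>0$, so to reach a contradiction it suffices to show that the right-hand side tends to $0$ under the standing hypothesis.

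The crux, and the step I expect to be the main obstacle, is the evaluation of the right-hand side in the limit. Here one must combine three ingredients: the uniform convergence of Lemma~\ref{uniform} (to pass the state through the cut-offs $e_\l$), the norm-boundedness of the finite partial sums proved in the preceding theorem, and the fact that only at most countably many terms are nonzero (\cite[Theorem~2.5]{Fuf2023Th}), which together should license passing to the limit term by term. The delicate point is to majorize each limiting term by the quantity $\psi(\<x_j,x_j\>)=\psi(x_j^*x_j)$ of the conclusion, using the Cauchy--Schwarz consequences above; getting the adjoints to line up correctly and controlling the interchange of the limit in $\l$ with the summation over $j$ is exactly where the argument must be carried out with care. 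Once each limiting term is shown to be controlled by $\psi(x_j^*x_j)=0$, the right-hand side tends to $0$, contradicting the strictly positive lower bound $c_1$, and hence some $\psi(x_j^*x_j)$ must be nonzero.
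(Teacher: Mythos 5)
Your overall strategy --- contradiction, lower frame bound tested on an approximate identity, with Lemma~\ref{uniform} and the norm-boundedness of the finite partial sums as the analytic inputs --- is exactly the paper's, but the step you yourself flag as the crux is a genuine gap, and in the form you set it up it cannot be closed. With the convention $\<a,b\>=a^*b$, the frame terms at $x=e_\l$ are
$$
\psi(\<e_\l,x_j\>\<x_j,e_\l\>)=\psi(e_\l x_jx_j^*e_\l),
$$
whose limit along $\l$ (via Lemma~\ref{uniform}) is $\psi(x_jx_j^*)$, \emph{not} $\psi(x_j^*x_j)$. Your Cauchy--Schwarz consequences $\psi(x_j^*a)=\psi(ax_j)=0$ do kill $\psi\bigl((e_\l x_j)^*(e_\l x_j)\bigr)=\psi(x_j^*e_\l^2x_j)$, but the frame term is $\psi(yy^*)$ with $y=e_\l x_j$, and a state is not tracial: $\psi(y^*y)=0$ gives no control of $\psi(yy^*)$. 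The mismatch is fatal, not merely delicate: in $\A=M_2(\mathbb{C})$ take the matrix units $x_1=e_{12}$, $x_2=e_{22}$ and the state $\psi(a)=a_{11}$; then $x_1x_1^*+x_2x_2^*=1$, so $\{x_1,x_2\}$ is a normalized tight frame, while $\psi(x_1^*x_1)=\psi(x_2^*x_2)=\psi(e_{22})=0$. Hence no argument can derive a contradiction from the hypothesis $\psi(x_j^*x_j)=0$ alone. The conclusion that is actually provable (and that the paper's own proof establishes --- it opens by assuming $\psi(x_jx_j^*)=0$ for all $j$) has $x_jx_j^*$ in place of $x_j^*x_j$; the $x_j^*x_j$ in the statement, and in the last line of the paper's proof, is an adjoint slip. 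So at your crux the correct move is to flip the hypothesis to $\psi(x_jx_j^*)=0$, not to search for a majorization of the frame terms by $\psi(x_j^*x_j)$.

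Second, once the adjoints are fixed, no term-by-term passage to the limit and no interchange of $\lim_\l$ with $\sum_j$ is needed, and the paper performs none. Write $s_\theta=\sum_{j\in J_\theta}x_jx_j^*$ for the finite partial sums; by the preceding theorem these form a norm-bounded set, so Lemma~\ref{uniform} yields $\l_1$ with $|\psi(s_\theta)-\psi(e_\l s_\theta e_\l)|\le c_1/4$ for all $\l>\l_1$ \emph{uniformly in} $\theta$. Under the corrected hypothesis $\psi(s_\theta)=0$ for every finite $\theta$, hence $\psi(e_\l s_\theta e_\l)\le c_1/4$; since the terms are positive, the supremum over $\theta$ gives $\sum_j\psi(\<e_\l,x_j\>\<x_j,e_\l\>)\le c_1/4$, and choosing $\l$ also large enough that $\psi(e_\l^2)\ge\frac12$ contradicts $c_1\psi(\<e_\l,e_\l\>)\le\sum_j\psi(\<e_\l,x_j\>\<x_j,e_\l\>)$ at a single fixed $\l$. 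The countability of the set of nonzero terms, which you list as an ingredient, plays only a cosmetic role. So the three tools you named are the right ones, but they combine as one uniform estimate over finite partial sums rather than as a licence for a limit--sum interchange.
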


\begin{proof}
Suppose that for any $j$ we have $\psi(x_jx_j^*)=0$. 

By the previous theorem the net of finite partial sums of the series $\sum\limits_{j\in J}x_jx_j^*$ is bounded, so by Lemma  \ref{uniform} there exists an element $e_{\l_1}$ of the approximate unit $\{e_\l\}_{\l\in\L}$ such that for all $\l>\l_1$ and all $\theta\in\Theta$ we have $|\psi(\sum\limits_{j\in J_\theta}x_jx_j^*)-\psi(e_\l \sum\limits_{j\in J_\theta}x_jx_j^* e_\l)|\le\frac{c_1}{4}$, where $c_1$ is the lower frame constant. Hence $\psi(e_\l \sum\limits_{j\in J_\theta}x_jx_j^* e_\l)\le\frac{c_1}{4}$ (since $e_\l \sum\limits_{j\in J_\theta}x_jx_j^* e_\l$ is positive) for any $\theta\in\Theta$ and any $\l>\l_1$. 

On the other hand there exists an element $e_{\l_2}$ of the approximate unit such that for all $\l>\l_2$ we have $\psi(\<e_\l,e_\l\>)=\psi(e_\l^2)\ge\frac{1}{2}$. Take $\l$ such that $\l>\l_1$ and $\l>\l_2$. Then since $\{x_j\}$ is a frame we have

$$
\frac{c_1}{2}\le
c_1\psi(\<e_\l,e_\l\>)\le\sum\limits_{j=1}^\infty\psi(\<e_\l,x_j\>\<x_j,e_\l\>),
$$
where the sum is taken over all non-zero elements, which in fact form at most countable set due to the definition of a frame. 
But
$
\sum\limits_{j=1}^\infty\psi( \<e_\l, x_j\>\<x_j, e_\l\>)=
\sup\limits_{\theta\in\Theta}\psi(e_\l \sum\limits_{j\in J_\theta}x_jx_j^* e_\l)
\le\frac{c_1}{4}$.
A contradiction, so there exists $j\in J$ such that $\psi(x_jx_j^*)>0$.
\end{proof}

\section{Non-existence of frames}

\begin{teo}
Suppose that a $C^*$-algebra $\A$ is not unital, but locally unital. Then as a module over itself $\A$ has no frames.
\end{teo}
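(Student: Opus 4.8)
**The plan is to derive a contradiction by assuming a frame exists and constructing a sequence of states whose supports cannot all be "captured" by a single local unit.**

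The key strategic idea comes from the previous two theorems in \S4. If $\{x_j\}_{j\in J}$ is a frame, then by Theorem~\ref{nonempty}, for every state $\psi$ there is a frame element $x_j$ with $\psi(x_j^*x_j)\ne 0$; moreover the net of partial sums of $\sum_J x_j^* x_j$ is bounded. The contrapositive of Proposition~\ref{herzero} is the engine: if $h$ is a local unit for an element $x$ and $\f(x)\ne 0$, then $\f(h)\ne 0$. So the plan is: first, test the frame against a carefully chosen sequence of states $\{\f_m\}$ to extract a countable subfamily of frame elements that are "essential"; second, use Lemma~\ref{fahg} to produce a single $h\in\A$, $0\le h\le 1$, that acts as a common local unit relative to all the $\f_m$ simultaneously (i.e. $\f_m(a)=\f_m(ha)=\f_m(ah)$); third, show that this single $h$ forces a contradiction with the lower frame bound.

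First I would fix an arbitrary state $\psi$ and run the argument of Theorem~\ref{nonempty} in reverse. Since the frame bounds hold, the countable set of nonzero frame terms tested against $\psi$ must have positive mass bounded below by the lower constant $c_1$; so I can select finitely or countably many frame elements $x_{j_1}, x_{j_2}, \dots$ carrying that mass. Each $x_{j_k}$, being an element of the locally unital algebra $\A$, has a local unit; but more usefully, each generates states through its polar/GNS data. The core construction: build a sequence $\{\f_m\}$ of states (for instance, states witnessing $\f_m(x_{j_m}^* x_{j_m})\ne 0$, or more cleverly states approximating $\psi$ that detect the individual frame contributions) and apply the lemma on common supports to obtain $g\in\A$ supporting all of them, then take $h$ a local unit for $g$. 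By Lemma~\ref{fahg}, in $\dot{\A}$ we get $\f_m((1-h)a)=0$ for every $a$ and every $m$.

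The decisive step — and the main obstacle — is converting "$h$ is a common local support-unit for a countable family of states" into a violation of the frame inequality for the element $1-h$ (or $e_\l - h e_\l$) in $\dot{\A}$. The frame lower bound applied to $x=1-h\in\dot{\A}$ reads $c_1\f(\<1-h,1-h\>)\le \sum_j \f(\<1-h,x_j\>\<x_j,1-h\>)$. For the states $\f_m$ I constructed, Lemma~\ref{fahg} makes the right-hand terms $\f_m((1-h)x_j x_j^*(1-h))$ collapse, since $(1-h)$ annihilates the relevant mass, while the left-hand side $c_1\f_m(\<1-h,1-h\>)$ need not vanish — indeed if $\A$ is nonunital then $1-h\ne 0$ and one can arrange a state detecting it. The delicate point is ensuring the chosen $\f_m$ simultaneously (a) live in the "tail" that $h$ fails to capture and (b) still see positive frame mass, exploiting that no single $h\in\A$ can be a genuine unit (Theorem on non-$\s$-unitality guarantees $1-h$ is never zero and cannot be swept away). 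Reconciling these two demands — making $(1-h)$ kill the frame side while keeping the norm side bounded below — is where the nonunital-but-locally-unital hypothesis does its real work, and assembling the states so that passing $m\to\infty$ forces $c_1\le 0$ will be the crux of the argument.
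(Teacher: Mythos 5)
Your proposal assembles the correct ingredients (Theorem~\ref{nonempty}, Proposition~\ref{herzero}, Lemma~\ref{fahg}, common supports plus a local unit), but the decisive step is both illegitimate as stated and unworkable in its legal form. First, the frame inequalities are available only for elements of the module $\cN=\A$; since $\A$ is non-unital, $1-h\in\dot{\A}\setminus\A$, so ``the frame lower bound applied to $x=1-h$'' is not a permitted move. Second, for the legal substitute $x=(1-h)e_\l\in\A$ the two demands you name at the end are irreconcilable for one and the same state. If $\f_m$ is one of the states controlled by Lemma~\ref{fahg}, then in its GNS triple $(\pi_m,\xi_m)$ the relation $\f_m((1-h)a)=0$ for all $a$ forces $\pi_m(h)\xi_m=\xi_m$; hence the norm side $\f_m(\<(1-h)e_\l,(1-h)e_\l\>)=\|\pi_m(1-h)\pi_m(e_\l)\xi_m\|^2$ tends to $\|\pi_m(1-h)\xi_m\|^2=0$ as $\l$ grows, so it is not bounded below. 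Conversely, a state that does detect $1-h$ cannot be among the $\f_m$, and nothing makes the frame side collapse for it: the information $\f_m(x_j^*x_j)=0$ only says $\pi_m(x_j)\xi_m=0$, which gives no control over $\pi_m(x_j)^*\eta$ for the vectors $\eta=\pi_m((1-h)e_\l)\xi_m$ that actually enter the inequality. So no choice of $m$ and $\l$ produces $c_1\le 0$; your closing sentence concedes this reconciliation is still to be found, and along this route it cannot be.

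The paper's proof never tests the frame against anything built from $1-h$; its contradiction is with Theorem~\ref{nonempty} itself, not with the numerical frame bounds. The mechanism you are missing is an iterated construction of states followed by a weak-$*$ limit: pick a state $\psi_1$, collect the nonempty, at most countable (Theorem~\ref{nonempty} and \cite[Theorem 2.5]{Fuf2023Th}) set $J_1$ of frame elements it detects, and take a single local unit $h_1$ for all $x_j^*x_j$, $j\in J_1$; since in a non-unital locally unital algebra an element admitting a local unit is never strictly positive (Corollary~\ref{luneth}), there is a state $\psi_2$ vanishing on a local unit dominating $h_1$, hence by Proposition~\ref{herzero} vanishing on $h_1$ and on every $x_j^*x_j$ with $j\in J_1$; iterate. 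A weak-$*$ limit point $\psi_0$ of $\{\psi_n\}$ in the state space of $\dot{\A}$ then annihilates every $x_j^*x_j$: each $x_j$ is detected by at most one $\psi_k$ and killed by all later states, and limit points pass through the tails. The genuinely delicate point --- and the place where Lemma~\ref{fahg} does its real work --- is showing that $\psi_0$ has not escaped to the character at infinity of $\dot{\A}$: the common local unit $h$ satisfies $\psi_n(h)=1$ for every $n$, hence $\psi_0(h)=1$, hence $\psi_0(e_\l)\to 1$ and $\psi_0$ is a bona fide state on $\A$. Applying Theorem~\ref{nonempty} to $\psi_0$ gives the contradiction. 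This limit-point argument, rather than any direct violation of the frame inequality, is the idea your proposal lacks.
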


\begin{proof}
Assume that there exists a frame $\{x_j\}_{j\in J}$ in $\A$.
Take an arbitrary state $\psi_1$ on $\A$.
There is a non-empty (due to Theorem \ref{nonempty}) at most countable (due to \cite[Theorem 2.5]{Fuf2023Th}) set $\{x_j\}_{j\in J_1}$ of elements of the frame such that $\psi_1(x_jx_j^*)\ne0$.
That is, if $j\in J\setminus J_1$, then $\psi_1(x_jx_j^*)=0$.
Let $h_1$ be a local unit for all $x_jx_j^*$, $j\in J_1$.

Assume that we have already found states $\psi_1,\dots,\psi_n$, positive elements $h_1,\dots,h_n$ and index sets
$J_1,\dots,J_n\subset J$ such that 
$\psi_i(x_jx_j^*)=0$ for all $j\in\bigcup\limits_{l=1}^{i-1}J_l$ for $i=2,\dots,n$, $\psi_l(x_jx_j^*)\ne0$ only for $j\in J_l$ (as a consequence, different sets $J_l$ do not intersect), $h_l$ is a local unit for all $x_jx_j^*$, $j\in J_l$.

Since $\A$ is locally unital, there exists a local unit $H_n$ for all $h_l$, $l=1,\dots,n$. Since $H_n$ is not strictly positive, there exists a state $\psi_{n+1}$ on $\A$ such that $\psi_{n+1}(H_n)=0$, and, hence (by Proposition \ref{herzero}), $\psi_{n+1}(h_l)=0$ for $l=1,\dots,n$, and $\psi_{n+1}(x_jx_j^*)=0$ for $j\in\bigcup\limits_{l=1}^{n}J_l$.

As in the case when $n=1$, there exists a non-empty at most countable set $\{x_j\}_{j\in J_{n+1}}$ of elements of the frame such that $\psi_{n+1}(x_jx_j^*)\ne0$ (and hence $J_{n+1}$ does not intersect any $J_{l}$, $l=1,\dots,n$, since $\psi_{n+1}(x_jx_j^*)=0$ for $j\in\bigcup\limits_{l=1}^{n}J_l$).
There also exists a local unit $h_{n+1}$ for all $x_jx_j^*$, $j\in J_{n+1}$.

By induction, we can continue this construction for any $n\in\N$.

The sequence $\{\psi_n\}_{n\in\N}$ has a limit point $\psi_0$ in the space of states on the unitalization $\dot\A$ with respect to the weak$^*$ topology (see \cite[2.3.15]{BratRob}).
Let us show that $\psi_0$ in fact is a state on $\A$, i.e. its restriction to $\A$ has norm 1. 

By Lemma \ref{fahg} there exists a positive $h\in\A$ such that $\psi_n(x)=\psi_n(xh)$ for all $n\in\N$. It also holds for $\psi_0$. Indeed, for any $x\in\A$, $\e>0$ there exists $k\in\N$ such that $\psi_k\in B_{\e,x,xh}(\psi_0)$, i.e. $|\psi_0(x)-\psi_k(x)|<\e$ and $|\psi_0(xh)-\psi_k(xh)|<\e$. Then

$$
|\psi_0(x)-\psi_0(xh)|\le|\psi_0(x)-\psi_k(x)|+|\psi_k(xh)-\psi_0(xh)|<2\e.
$$

Since this holds for any $x\in\A$, $\e>0$ we have that $\psi_0(x)=\psi_0(xh)$ for any $x\in\A$.

Now take an approximate unit $\{e_\l\}_{\l\in\L}$ in $\A$. Then

$$
\psi_0(e_\l)=\psi_0(e_\l h)\xrightarrow[\l\in \L]{}\psi_0(h)
$$
since $e_\l h\xrightarrow[\l\in \L]{}h$ in norm. On the other hand, for all $n\in\N$

$$
1\xleftarrow[\l\in \L]{}\psi_n(e_\l)=\psi_n(e_\l h)\xrightarrow[\l\in \L]{}\psi_n(h),
$$
so $\psi_n(h)=1$ for all $n\in\N$ and hence $\psi_0(h)=1$ since $\psi_0$ is a limit point of 
$\{\psi_n\}$ in the weak$^*$ topology. 
So, $\psi_0(e_\l)\xrightarrow[\l\in \L]{}1$ and hence $\psi_0$ is a state on $\A$ (due to \cite[2.3.11]{BratRob}). Let us show now that $\psi_0(x_jx_j^*)=0$ for all $j\in J$ which will contradict the Theorem \ref{nonempty}.

First let it be that $j\in J\setminus\bigcup\limits_{l=1}^{\infty}J_{l}$. Then $\psi_n(x_jx_j^*)=0$ for all $n\in\N$. Hence,
$\psi_0(x_jx_j^*)$ must equal 0. Otherwise, if $\psi_0(x_jx_j^*)=q>0$ then in any neighborhood of $\psi_0$  in weak$^*$ topology of the form $B_{q/2,x_jx_j^*}(\psi_0)=\{\f:|\psi_0(x_jx_j^*)-\f(x_jx_j^*)|<q/2\}$ there exists $\psi_n$ such that $|\psi_n(x_jx_j^*)-\psi_0(x_jx_j^*)|=q$. A contradiction.

Let now $j\in\bigcup\limits_{l=1}^{\infty}J_{l}$, i.e. $j\in J_k$ for some $k\in\N$, and suppose that $\psi_0(x_jx_j^*)\ne0$. 
Hence, $\psi_{k+l}(x_jx_j^*)=0$ for all $l\in\N$ (because $\psi_{k+l}$ vanishes on $h_k$ and, hence, on $x_jx_j^*$) and $\psi_0$ is still a limit point for the sequence $\{\psi_n\}_{n=k+1}^\infty$ in weak$^*$ topology, and then $\psi_0(x_jx_j^*)=0$ as in the previous case.

Thus,
$\{x_j\}_{j\in J}$ is not a frame, which contradicts the assumption.
\end{proof}

\begin{cor}
A non-unital locally unital $C^*$-algebra $\A$ cannot be represented as an orthogonal direct summand of a standard module $\bigoplus\limits_{\l\in\Lambda}\dot{\A}$.
\end{cor}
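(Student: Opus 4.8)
The plan is to argue by contradiction: assuming that $\{x_j\}_{j\in J}$ is a frame for $\A$ regarded as a module over itself, I would manufacture a single state on $\A$ that annihilates every $x_j^*x_j$, in direct conflict with Theorem \ref{nonempty} (a frame must ``separate states''). The engine driving this is an inductive construction of a sequence of states $\{\psi_n\}$ together with local units that successively kill off the frame elements activated at earlier stages, after which a weak-$*$ limit point of the sequence delivers the forbidden state. The whole scheme leans on the hypothesis of non-unitality precisely through the fact, recorded in Corollary \ref{luneth}, that in a non-unital locally unital algebra no element is thick.

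I would begin with any state $\psi_1$. By Theorem \ref{nonempty} the set $J_1=\{j:\psi_1(x_j^*x_j)\ne0\}$ is non-empty, and by \cite[Theorem 2.5]{Fuf2023Th} it is at most countable, so local unitality together with the proposition on local units for a weighted sum $\sum_j 2^{-j}a_j/(1+\|a_j\|)$ furnishes a single local unit $h_1$ for all $x_j^*x_j$, $j\in J_1$. At the inductive step, having built $\psi_1,\dots,\psi_n$, local units $h_1,\dots,h_n$ and disjoint index sets $J_1,\dots,J_n$, I take a local unit $H_n$ for $h_1,\dots,h_n$; since $H_n$ cannot be thick, there is a state $\psi_{n+1}$ with $\psi_{n+1}(H_n)=0$, and Proposition \ref{herzero} then forces $\psi_{n+1}(h_l)=0$, hence $\psi_{n+1}(x_j^*x_j)=0$, for every previously activated $j$. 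A further application of Theorem \ref{nonempty} yields a new non-empty countable set $J_{n+1}$, automatically disjoint from the earlier ones, with its own local unit $h_{n+1}$.

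The decisive and most delicate part is the passage to the limit, and I expect it to be the main obstacle. The sequence $\{\psi_n\}$ has a weak-$*$ limit point $\psi_0$ in the (compact) state space of the unitalization $\dot\A$, but a priori $\psi_0$ could fail to restrict to a \emph{state} on $\A$: its mass might ``escape to infinity'', dropping its norm on $\A$ below $1$. This is exactly where local unitality must be exploited, and where a soft compactness argument alone does not suffice. I would invoke Lemma \ref{fahg} to obtain one $h\in\A$, $0\le h\le1$, with $\psi_n(a)=\psi_n(ah)$ for all $a$ and all $n$; approximating $\psi_0$ on the two elements $x$ and $xh$ transfers this to $\psi_0(a)=\psi_0(ah)$. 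Feeding an approximate identity $\{e_\l\}$ into this identity, using $e_\l h\to h$ in norm and $\psi_n(e_\l)\to1$, shows $\psi_n(h)=1$ for every $n$, hence $\psi_0(h)=1$ by limit-point-ness, hence $\psi_0(e_\l)\to1$; by \cite[2.3.11]{BratRob} this certifies $\psi_0$ as a genuine state on $\A$.

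It then remains to check $\psi_0(x_j^*x_j)=0$ for every $j\in J$, which I would split into two cases. If $j\notin\bigcup_l J_l$, then $\psi_n(x_j^*x_j)=0$ for all $n$, so a positive limit value $q>0$ is impossible, since the weak-$*$ neighbourhood $\{\f:|\f(x_j^*x_j)-\psi_0(x_j^*x_j)|<q/2\}$ would have to contain some $\psi_n$ with $\psi_n(x_j^*x_j)=0$. If $j\in J_k$, then $\psi_{k+l}(x_j^*x_j)=0$ for all $l$ (each later state vanishes on $h_k$, hence on $x_j^*x_j$), and $\psi_0$ is still a limit point of the tail $\{\psi_n\}_{n>k}$, so the same argument applies. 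Thus $\psi_0$ is a state annihilating every $x_j^*x_j$, contradicting Theorem \ref{nonempty} and proving that no frame can exist.
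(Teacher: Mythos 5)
Your argument, as written, does not prove the stated Corollary; it proves the paper's preceding Theorem (that a non-unital locally unital $C^*$-algebra admits no frames as a module over itself). Indeed, your construction of the states $\psi_n$, the local units $h_n$ and $H_n$, the weak-$*$ limit point $\psi_0$ in the state space of $\dot{\A}$, the use of Lemma \ref{fahg} to rule out escape of mass to infinity, and the final two-case verification that $\psi_0(x_j^*x_j)=0$ for all $j$ (contradicting Theorem \ref{nonempty}) reproduce, essentially step by step, the paper's own proof of that Theorem, and this part is correct. But the Corollary is a different statement: it asserts that $\A$ cannot be an orthogonal direct summand of a standard module $\bigoplus\limits_{\l\in\Lambda}\dot{\A}$, and direct summands never appear anywhere in your proposal.

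The missing idea is the Frank--Larson correspondence quoted in Section 1 of the paper (\cite[3.5, 4.1 and 5.3]{FrankLarson2002}, see also \cite[Theorem 1.1]{HLi2010}): a Hilbert $C^*$-module $\cN$ over $\A$ can be represented as an orthogonal direct summand in a standard module $\bigoplus\limits_{\l\in\Lambda}\dot{\A}$ of some cardinality if and only if $\cN$ admits a standard frame. With this bridge the Corollary is immediate from the Theorem: were $\A$ such a direct summand, it would possess a standard frame, in particular a frame, which the Theorem forbids. Without this bridge, non-existence of frames by itself says nothing about direct summands, so your proposal is incomplete as a proof of the Corollary; and with it, your reconstruction of the frame-non-existence argument is redundant, since the paper establishes that Theorem immediately before the Corollary, whose intended proof is precisely this one-line reduction.
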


\end{document}